\documentclass{elsarticle} 

\usepackage{amsmath,amssymb,amsfonts,graphicx,hyperref}
\usepackage[margin=1in]{geometry}

\usepackage{amstext}
\usepackage{stmaryrd}
\usepackage{algpseudocode}

\usepackage[normalem]{ulem}
\usepackage{graphicx}

\usepackage{array}
\usepackage{verbatim}
\usepackage{booktabs}
\usepackage{calc}
\usepackage{textcomp}
\usepackage{multirow}

\usepackage[all]{xy}

\usepackage{cancel}
\usepackage{mathtools}
\usepackage{placeins}
\usepackage{xspace}

\usepackage{tikz,tikzscale}
\usetikzlibrary{calc}
\usetikzlibrary{shadings}

\usepackage[mode=multiuser,status=draft]{fixme} 
\fxsetup{envlayout=color}
\fxsetup{innerlayout=inline}
\fxsetup{targetlayout=colorcb}
\FXProvidesTargetLayout{color}

\fxusetheme{color}
\FXRegisterAuthor{ss}{envss}{SS}
\FXRegisterAuthor{mr}{envmr}{MR}
\FXRegisterAuthor{mw}{envmw}{MW}
\FXRegisterAuthor{mk}{envmk}{MK}

\usepackage{subcaption}
\usepackage[format=plain,indention=.5cm]{caption}
\usepackage{pgfplots}
\pgfplotsset{compat=1.18}
\usepgfplotslibrary{fillbetween}

\usepackage{bm,upgreek}

\newcommand{\Qcont}[1]{\mathbb{Q}_{#1}}
\newcommand{\Qdg}[1]{\mathbb{Q}^{\mathrm{DG}}_{#1}}
\newcommand{\pairDGthree}{\mathbb{Q}_3/\mathbb{Q}^{\mathrm{DG}}_1}
\newcommand{\pairDGtwo}{\mathbb{Q}_2/\mathbb{Q}^{\mathrm{DG}}_0}
\usepackage{graphicx} 
\usepackage[linesnumbered,vlined,commentsnumbered,ruled]{algorithm2e}

\definecolor{darkgreen}{rgb}{0.0, 0.5, 0.0}

\providecommand{\pr}{^{\prime}}

\usepackage{amsthm}
\newtheorem{theorem}{Theorem}
\newtheorem{lemma}{Lemma}

\newtheorem{corollary}{Corollary}
\theoremstyle{definition}
\newtheorem{assumption}{Assumption}
\theoremstyle{remark}
\newtheorem{remark}{Remark}

\makeatletter
\def\ps@pprintTitle{%
 \let\@oddhead\@empty
 \let\@evenhead\@empty
 \def\@oddfoot{\reset@font\hfil\thepage\hfil}%
 \let\@evenfoot\@oddfoot}
\makeatother

\title{Block Preconditioning for Shifted Boundary Method Discretisations of the Stokes Problem}

\author[heid]{Micha\l{} Wichrowski\corref{cor}}
\author[heid]{Ajay Ajith}

\cortext[cor]{Corresponding author. \texttt{mwichro@mimuw.edu.pl}}
\address[heid]{Interdisciplinary Center for Scientific Computing, Heidelberg University, Germany}

\begin{document}

\begin{abstract}
    The Shifted Boundary Method (SBM) sidesteps body-fitted meshing by shifting boundary conditions onto a surrogate
boundary and correcting for the displacement through Taylor expansions. Despite its broad analysis and application,
scalable iterative solvers for the incompressible Stokes equations remain underdeveloped. We present a block
preconditioner for SBM--Stokes discretisations that uses the velocity block together with a pressure mass matrix as a
Schur complement approximation. Because the SBM system is non-symmetric, classical operator preconditioning does not
apply directly; a field-of-values analysis instead shows that the non-symmetric SBM contributions act as asymptotically
small perturbations of a standard saddle-point operator, yielding mesh-independent GMRES convergence on sufficiently
fine meshes. Numerical experiments demonstrate iteration counts under refinement across geometries of increasing
complexity. We expose a coarse-mesh regime in which an under-resolved grid produces elevated iteration counts, an
artefact of insufficient resolution that vanishes once the mesh captures the geometry.

\end{abstract}

\begin{keyword}
    immersed boundary methods \sep finite element methods \sep Shifted Boundary Method \sep
    Stokes equations \sep block preconditioning \sep saddle-point systems
\end{keyword}
\maketitle

\section{Introduction}
\label{sec:intro}

Capturing the intricate detail of real-world geometries within a computational grid is a persistent challenge in the
simulation of incompressible flow. The traditional finite element route demands a body-fitted mesh whose cells conform
to the domain boundary. While effective, this bespoke mesh generation is computationally expensive and frequently
becomes the bottleneck of the entire pipeline for intricate three-dimensional shapes~\cite{main2018shifted}, requiring
manual intervention to keep elements near curved or narrow boundaries from degenerating into ill-shaped slivers.
Unfitted finite element methods offer an alternative by employing a fixed background mesh that remains independent of
the physical boundary, trading bespoke meshing for a more complex algebraic structure.

Among these unfitted approaches, the Shifted Boundary Method (SBM)~\cite{main2018shifted} shifts the location where
boundary conditions are applied to meet the mesh. By defining the problem on a surrogate domain and extrapolating
boundary conditions onto it (typically via Taylor expansions or more general extension
operators~\cite{zorrilla2024shifted}, enforced in a Nitsche-like manner~\cite{nitsche1971variationsprinzip}), SBM
avoids the complex geometric intersections of methods such as CutFEM~\cite{burman2015cutfem} and the associated
cut-cell quadrature. However, SBM shifts part of the computational challenge from the mesh generator to the linear
solver: the extrapolation terms introduce non-symmetry and potential indefiniteness, so a simple mesh no longer
guarantees a simple matrix. While the conditioning of SBM scales like $\mathcal{O}(h^{-2})$~\cite{atallah2021shifted},
comparably to body-fitted methods~\cite{collins2023penalty}, the efficient solution of the algebraic systems arising
from high-order SBM formulations remains largely unexplored.

The SBM has matured rapidly since its first formulation~\cite{main2018shifted, main2018shiftedVol2}. The original
construction used cells strictly within the domain, whereas more recent approaches~\cite{yang2024optimal} also admit
intersected cells based on a volume-fraction threshold. A complete stability and convergence theory now covers the
Stokes problem~\cite{atallah2020analysis}, the Poisson problem on domains with corners~\cite{atallah2021analysis}, and
high-order discretisations of arbitrary polynomial degree~\cite{atallah2022high}. The method has been extended to
Isogeometric Analysis~\cite{ANTONELLI2024shiftedIGA}, to a broad range of physics including solid
mechanics~\cite{atallah2021shifted, atallah2024nonlinear}, and to problems with embedded
interfaces~\cite{li2020shifted, xu2024weighted}, where the formulation is modified to impose jump conditions across
internal boundaries. Further innovations include penalty-free variants~\cite{collins2023penalty} and integration with
level-set methods~\cite{kuzmin2022unfitted, xue2021new}, which facilitate operations such as the closest-point
projection that SBM relies on to relate the surrogate boundary $\tilde{\Gamma}$ to the true boundary $\Gamma$.

The main geometric task in SBM is to determine accurately the relationship between points on $\tilde{\Gamma}$ and
$\Gamma$, which avoids integration over the arbitrarily shaped domains that arise from cell-boundary intersections. In
the broader unfitted landscape, CutFEM~\cite{burman2015cutfem} discretises directly on the physical domain by cutting
background cells, ensuring conditioning through geometry-adapted quadrature and stabilisation such as the ghost
penalty~\cite{burman2010ghost}. This comes at the cost of specialised quadrature in every term of the bilinear form on
every cut cell, an overhead incurred at each operator evaluation in a matrix-free setting. SBM, by contrast, retains
the efficiency of standard tensor-product quadrature, confining the geometric work to a one-time preprocessing step.
The algebraic challenges of SBM are therefore worth confronting directly.

The geometric flexibility of SBM introduces algebraic challenges. The extrapolation of boundary conditions yields
linear systems that are non-symmetric and potentially indefinite~\cite{wichrowski2025geometric}, particularly for
higher-order polynomial approximations, demanding specialised preconditioning. Multigrid
methods~\cite{hackbusch1985multi} are the natural candidates for the large systems arising from such discretisations,
yet their use as preconditioners for SBM remains largely unexplored. Algebraic Multigrid (AMG) has been applied to SBM
for continuous linear elements~\cite{atallah2020second}, but it generally struggles with high-order discretisations
even on body-fitted meshes, and is less effective still for unfitted methods with $p > 1$; it also fails to exploit the
geometric regularity of structured background grids. Recent work has addressed this through geometric multigrid with
subspace-correction patch smoothers tailored to the SBM boundary coupling~\cite{wichrowski2025geometric, shypatch},
with efficient matrix-free realisations in view~\cite{wichrowski2025local, wichrowski2025matrix}.

Every one of these solver advances targets \emph{symmetric positive-definite} (SPD) problems for which multigrid
smoothers and subspace corrections are well understood. The Stokes problem lacks this structure entirely. The
incompressibility constraint $\nabla \cdot \mathbf{u} = 0$ produces a saddle-point system with a zero diagonal block,
rendering it indefinite, so the solvers developed for SBM in~\cite{wichrowski2025geometric, shypatch} do not apply
without substantial modification.

For body-fitted Stokes discretisations, a well-established theory governs the solution of the resulting saddle-point
systems. Two broad families dominate. The first treats the system through \emph{block preconditioners} acting on the
$2\times 2$ structure: operator-compatible preconditioners~\cite{PreconditioningPDEs} exploit the spectral equivalence
of the pressure mass matrix with the Schur complement $S = B A^{-1} B^{T}$ to deliver mesh-independent convergence of
Krylov solvers such as MinRes and GMRES~\cite{BlockPreconditioners, BlockPrec2, Elman2014-gj}, provided the
discretisation is \textit{inf-sup} stable. The convergence of these block-diagonal and block-triangular preconditioners
is characterised through eigenvalue and field-of-values bounds~\cite{murphy2000note, loghin2004analysis,
    klawonn1999fov, starke1997fov, eisenstat1983variational}, the field-of-values arguments being particularly relevant for
the non-symmetric, GMRES-driven setting considered here. The second family applies multigrid directly to the coupled
system, relying on smoothers that respect the saddle-point structure: the Braess--Sarazin
smoother~\cite{braess1997efficient}, the closely related inexact smoothers of Zulehner~\cite{zulehner2000class}, and
the local Vanka-type relaxations~\cite{vanka1986block} that solve small coupled subproblems cell by cell. A unified
analysis of such smoothers and of multigrid for indefinite Stokes systems was given by Schöberl and
Zulehner~\cite{schoberl2003schwarz}. One successful approach within this family combines \emph{divergence-conforming}
discretisations ($H(\mathrm{div})$ elements such as Raviart--Thomas or BDM, which yield pointwise divergence-free
velocities) with overlapping Schwarz (vertex-patch) smoothers, for which multigrid convergence has been established by
Arnold, Falk, and Winther~\cite{arnold2000multigrid} and developed into highly efficient, $p$- and Reynolds-robust
solvers by Kanschat, Mao, and Sch\"oberl and co-workers~\cite{kanschat2015multigrid, farrell2019augmented}. A
complementary line of work builds \emph{constrained} smoothers that enforce the divergence constraint within the
relaxation; while not central to the present block-preconditioning approach, these strategies (developed for Stokes and
related constrained problems~\cite{wichrowski2022stokes, jodlbauer2024matrix}) illustrate the alternative of building
the saddle-point structure into the smoother rather than into an outer block preconditioner.

For \emph{unfitted} methods the picture is far less settled. Tailored preconditioners have been demonstrated for
immersed isogeometric Stokes~\cite{deprenter2019preconditioning} and for the Stokes--immersed-boundary saddle
point~\cite{zhang2014ibm, bano2025ibm}, while optimal subspace decompositions have been analysed for CutFEM elliptic
problems, with the CutFEM Stokes case left explicitly open~\cite{gross2023cutfem}. Each construction, however, is bound
to the algebraic structure of its own immersed method. The SBM extrapolation modifies the velocity block $A$ and the
divergence block $B$ in ways distinct from cut-cell and immersed-boundary methods, and to the best of our knowledge no
block preconditioning strategy has been analysed for the saddle-point systems arising from \emph{SBM discretisations of
    the Stokes equations}.

This work addresses this problem. We show that high-order SBM--Stokes systems can be solved with mesh-independent
efficiency using a block upper-triangular preconditioner built from the velocity block and a pressure-mass-matrix Schur
complement approximation~\cite{PreconditioningPDEs, Elman2014-gj}. Numerical experiments investigate Krylov iteration
counts for the \textit{inf-sup} stable $\pairDGthree$ and $\pairDGtwo$ element pairs across a range of embedded
geometries. We additionally investigate the minimum resolution required to resolve surrogate-boundary features: on
coarse grids, where surrogate evaluations may conflict, iteration counts degrade, but the ideal mesh-independent
performance is recovered once the grid resolves the geometry. The implementation is built on the \texttt{deal.II}
finite element library~\cite{dealii2019design, dealii2024}.

The remainder of the paper is organised as follows. Section~\ref{sec:methods} formulates the Shifted Boundary Method
for the Poisson and Stokes problems, establishing the variational framework, the choice of finite element spaces, and
the block preconditioner. Section~\ref{sec:implementation} describes the implementation within \texttt{deal.II},
focusing on the assembly of the SBM terms and the construction of the block preconditioner. Section~\ref{sec:results}
presents numerical experiments validating the mesh-independence of the solver for the $\pairDGthree$, $\pairDGtwo$, and
Taylor--Hood discretisations across different geometric arrangements. Section~\ref{sec:conclusion} provides concluding
remarks.

\section{Method formulation}

\label{sec:methods}

We first introduce the Shifted Boundary Method (SBM) for the Poisson equation, which provides a simple setting in which
to present its key ingredients before extending them to the Stokes equations. We follow
\cite{shypatch,wichrowski2025geometric} for the Poisson problem and \cite{atallah2020analysis} for the Stokes problem.

\subsection{Poisson equation}

Consider the Poisson problem
\begin{align}
    -\Delta u & = f \quad \text{in } \Omega, \label{eq:poisson}        \\
    u         & = u_d \quad \text{on } \Gamma, \label{eq:dirichlet_bc}
\end{align}
where $\Omega \subset \mathbb{R}^d$ is a domain with boundary $\Gamma = \partial\Omega$, $f$ is a given source term, and $u_d$ is the prescribed Dirichlet datum on $\Gamma$.

Seeking the solution $u \in H^1(\Omega)$, multiplying \eqref{eq:poisson} by a test function $v$, and integrating by
parts yields
\begin{equation}
    \int_\Omega \nabla u \cdot \nabla v \, dx  - \int_{\Gamma} (\nabla u \cdot \mathbf{n})\, v \, ds = \int_\Omega f v \, dx ,
\end{equation}
where $\mathbf{n}$ denotes the outward unit normal to $\Gamma$.

We next introduce a triangulation \( \mathcal{T}_h \) consisting of quadrilateral (2D) or hexahedral (3D) elements of
size $h$, and define a finite element space $V_h\subset H^1(\Omega)$ of continuous Lagrange elements of degree \( p \).
In classical finite element methods the mesh conforms to the boundary (unlike the background-mesh approach illustrated
in Figure~\ref{fig:sbm_setup}), and the Dirichlet condition is enforced strongly by constraining the degrees of freedom
on $\Gamma$. When the function space does not satisfy the essential boundary conditions by construction, they must
instead be enforced weakly through additional integral terms on $\Gamma$.

Nitsche's method \cite{nitsche1971variationsprinzip} imposes Dirichlet boundary conditions weakly by augmenting the
bilinear form with consistency and penalty terms on $\Gamma$. The standard Nitsche formulation for the Poisson problem
seeks $u_h \in V_h$ such that for all $v_h \in V_h$:
\begin{equation}
    \begin{split}
        \int_\Omega \nabla u_h \cdot \nabla v_h \, dx
        \;\; - \int_{\Gamma} (\nabla u_h \cdot \mathbf{n}) v_h \, ds
        - & \sigma\int_{\Gamma} (\nabla v_h \cdot \mathbf{n}) u_h \, ds
        + \int_{\Gamma} \alpha u_h v_h \, ds                            \\
          & = \int_\Omega f v_h \, dx
        - \sigma\int_{\Gamma} (\nabla v_h \cdot \mathbf{n}) u_d \, ds
        + \int_{\Gamma} \alpha u_d v_h \, ds.
    \end{split}
    \label{eq:nitsche_poisson}
\end{equation}
Here, $\alpha$ is a penalty parameter, typically chosen as $\alpha = \mathcal{O}(p^2 h^{-1})$. The choice $\sigma=1$
yields a symmetric formulation, while $\sigma=-1$ results in a non-symmetric formulation in which the penalty term can
be omitted~\cite{baumann1999discontinuous}.

\subsection{Shifted boundary method}

The SBM is an approximate-domain method for boundary value problems: the location at which boundary conditions are
imposed is shifted from the true boundary to a nearby surrogate boundary made up of mesh faces.

The boundary data are defined on $\Gamma$, whereas the discrete solution lives only on the surrogate domain $\Omega\pr$
with boundary $\Gamma\pr$ (see Figure~\ref{fig:sbm_setup}). To reconcile the two, the solution is extended from
$\Omega\pr$ towards $\Gamma$ by a Taylor expansion of the same order as the finite element basis functions \cite{SBMP1,
    atallah2022high}. For every point on the surrogate boundary $\Gamma\pr$, we denote by $u_p$ the Taylor extension of $u$
evaluated at the corresponding projected point on the true boundary $\Gamma$. The boundary conditions are then imposed
via Nitsche's method, comparing $u_p$ with the boundary datum $u_d$.

\begin{figure}[!ht]
    \centering
    \centering
\begin{tikzpicture}[scale=1.4]
    \draw[step=1cm, gray, very thin] (0.8,-0.2) grid (5.2,2.2);

    \foreach \x/\y in {2/1, 1/0, 1/1, 2/0, 3/0, 4/0}
        {
            \fill[green, opacity=0.5] (\x,\y) rectangle (\x+1,\y+1);
        }
    \foreach \x/\y in {1/0 }{
            \fill[green, opacity=0.5] (\x,\y) rectangle (\x-0.2,\y+2);
        }

    \fill[green, opacity=0.5] (0.8,-0.2) rectangle (5.2,0);

    \draw[line width=2pt, red, domain=51:90, samples=100, variable=\t]
    plot ({0.8 + 7.0*cos(\t)}, {-4.4 + 7.0*sin(\t)});
    \node at (2.6,2.5) {$\Gamma$};

    \fill[green!30, opacity=0.4] plot[domain=51:90, samples=100, variable=\t] ({0.8 + 7.0*cos(\t)}, {-4.4 +
            7.0*sin(\t)}) -- (0.8,0) -- (5.2,0) -- cycle;

    \pgfmathsetmacro{\myangle}{66}
    \draw[<-, thick] ({0.8 + 6.9*cos(\myangle) + 0.3}, {-4.4 + 6.9*sin(\myangle)}) -- ({0.8 + 5.9*cos(\myangle)
            +
            0.3}, {-4.4 +
            5.9*sin(\myangle)});
    \pgfmathsetmacro{\labelangle}{\myangle - 6} 
    \node at ({0.8 + 6.9*cos(\labelangle) - 0.6}, {-4.4 + 6.9*sin(\labelangle)+0.1}) {\large $\mathbf{d}$};

    \draw[->, thick] ({0.8 + 6.8*cos(\myangle-5) + 0.3}, {-4.4 + 6.8*sin(\myangle-5)}) -- ({0.8 +
            7.6*cos(\myangle-5)
            +
            0.3}, {-4.4 +
            7.6*sin(\myangle-5)});
    \node at ({0.8 + 7.5*cos(\labelangle-5) - 0.54}, {-4.4 + 7.5*sin(\labelangle-5)+0.36}) {\large $\mathbf{n}$};

    \draw[<-, thick]  ({0.8 + 5.9*cos(\myangle) +
            0.3}, {-4.4 + 6.9*sin(\myangle)}) -- ({0.8 + 5.9*cos(\myangle) +
            0.3}, {-4.4 +
            5.9*sin(\myangle)});
    \node at (3.4, 1.5) {\large $\mathbf{n}\pr$}; 

    \draw[line width=2pt, blue] (0.8,2) -- (3,2) -- (3,1) -- (4,1) -- (5,1)-- (5,0)-- (5.2,0.) ;

    \draw[line width=2pt, blue, dashed] (3,2) -- (4,2) -- (4,2) -- (4,1) ;
    \node at (1.7,2.2) {$\Gamma\pr$};

    \node at (2.5,0.7) {$\Omega\pr$};

    \node at (1.2,2.3) {${\Omega}$};

\end{tikzpicture}
    \caption{Schematic illustrating the background mesh, interior cells (green), the surrogate boundary
        $\Gamma\pr$ (thick blue line) along the upper boundary of the interior cells, and the true boundary
        $\Gamma = \partial\Omega$ (red). Figure adapted from \cite{shypatch,wichrowski2025geometric}.}
    \label{fig:sbm_setup}
\end{figure}

Applying the symmetric Nitsche formulation \eqref{eq:nitsche_poisson} (with $\sigma=1$) on the surrogate boundary, with
the extended values $u_p$ in place of the trace of $u_h$, yields the SBM formulation of the Poisson problem:
\begin{equation}
    \begin{split}
        \int_{\Omega \pr} \nabla u \cdot \nabla v \, dx - \int_{\Gamma \pr} \partial_{n \pr}u \, v \, ds
        - \int_{\Gamma \pr} u_p \, \partial_{n \pr}v \, ds
        + \alpha \int_{\Gamma \pr} u_p v \, ds
        \\
        = \int_{\Omega \pr} f v \, dx
        - \int_{\Gamma \pr} u_d \, \partial_{n \pr}v \, ds
        + \alpha \int_{\Gamma \pr} u_d v \, ds,
    \end{split}
\end{equation}
where $\partial_{n\pr}$ denotes the derivative in the direction of the surrogate normal $\mathbf{n}\pr$.

\subsection{SBM for the Stokes equations}
\label{sec:sbm_stokes}
The Stokes problem considered in this work reads: find the velocity $\mathbf{u}$ and the pressure $p$ such that
\begin{align}
    -\nabla \cdot \left(2\boldsymbol{\nabla}_s \mathbf{u}\right) + \nabla p & = \mathbf{f}   & \text{in } \Omega ,     \label{eq:stokes_1}    \\
    -\nabla \cdot \mathbf{u}                                                & = 0            & \text{in } \Omega,      \label{eq:stokes_2}    \\
    \mathbf{u}                                                              & = \mathbf{u}_d & \text{on } \Gamma_D,    \label{eq:stokes_bc_d} \\
    \left(2\boldsymbol{\nabla}_s \mathbf{u} - p\mathbf{I}\right)\mathbf{n}  & = \mathbf{0}   & \text{on } \Gamma_N,    \label{eq:stokes_bc_n}
\end{align}
where the boundary splits into a Dirichlet part $\Gamma_D$ and a Neumann part $\Gamma_N$ with $\Gamma = \overline{\Gamma_D \cup \Gamma_N}$, and $\boldsymbol{\nabla}_s\mathbf{u}$ denotes either the symmetric gradient $\varepsilon(\mathbf{u}) = \tfrac12\left(\nabla\mathbf{u} + \nabla\mathbf{u}^T\right)$ or the regular gradient $\tfrac12\nabla\mathbf{u}$; both choices are carried through the analysis below (see \eqref{eq:stress_tensor}). Here $\mathbf{f}$ is a given body force, $\mathbf{u}_d$ is the prescribed Dirichlet datum on $\Gamma_D$, and \eqref{eq:stokes_bc_n} is the homogeneous ``do-nothing'' natural condition imposed on $\Gamma_N$.

We adopt a high-order mixed discretisation that pairs a continuous velocity space with a discontinuous pressure space.
The velocity space $V_h \subset [H^1(\Omega)]^d$ consists of continuous Lagrange elements of degree $k=3$, with local
space
\[
    V_h(K) = [Q_3(K)]^d
\]
on each element $K$, where $Q_3$ denotes polynomials of degree at most~$3$ in each coordinate direction. The pressure
space $Q_h \subset L^2(\Omega)$ consists of discontinuous elements of degree $k=1$, with local space
\[
    Q_h(K) = P_1(K),
\]
where $P_1$ denotes the space of linear polynomials; the discontinuous pressure space allows jumps across element
interfaces. This pair, denoted $\pairDGthree$, is inf-sup stable \cite{atallah2020analysis}. We also consider the
$\pairDGtwo$ pair, with velocity of degree $k=2$ and piecewise-constant pressure, which is likewise inf-sup stable
\cite{atallah2020analysis}.

To simplify the variational formulation, we introduce the combined stress tensor
\begin{equation}
    \boldsymbol{\sigma}(\mathbf{u},p) = 2 \boldsymbol{\nabla}_s\mathbf{u} - p \mathbf{I} ,
    \qquad
    \boldsymbol{\nabla}_s\mathbf{u} \in \{\, \varepsilon(\mathbf{u}),\; \tfrac12\nabla\mathbf{u} \,\} ,
    \label{eq:stress_tensor}
\end{equation}
where the operator $\boldsymbol{\nabla}_s$ may be taken either as the symmetric gradient
$\varepsilon(\mathbf{u}) = \tfrac12(\nabla\mathbf{u}+\nabla\mathbf{u}^T)$, recovering the
physical stress $\boldsymbol{\sigma} = 2\varepsilon(\mathbf{u}) - p\mathbf{I}$, or as
$\tfrac12\nabla\mathbf{u}$, giving the regular-gradient (pseudo-stress) form
$\boldsymbol{\sigma} = \nabla\mathbf{u} - p\mathbf{I}$. For the divergence-free velocity field
of \eqref{eq:stokes_2} both choices yield the same strong-form momentum balance, the vector
Laplacian $-\mu\Delta\mathbf{u} + \nabla p$, and differ only in the natural (traction) boundary
condition \eqref{eq:stokes_bc_n}. The two formulations coincide as discretisations on
the Dirichlet boundary $\Gamma_D$, where the trace of $\mathbf{u}$ is prescribed and enters the
weak form only through the data $\mathbf{u}_d$; they differ on $\Gamma_N$, where each leaves its
own do-nothing condition: $(2\varepsilon(\mathbf{u}) - p\mathbf{I})\mathbf{n} = \mathbf{0}$ for
the symmetric gradient and $(\nabla\mathbf{u} - p\mathbf{I})\mathbf{n} = \mathbf{0}$ for the
regular gradient. The do-nothing condition is enforced \emph{weakly and consistently}
in both cases by omitting the boundary integral on $\Gamma_N$ in the weak form below, so no
explicit traction term distinguishes the two formulations in the assembled system. The SBM and
Nitsche machinery, and the entire stability analysis of Section~\ref{sec:theory}, act only on the
embedded Dirichlet boundary $\Gamma_D$ (and its surrogate $\Gamma\pr$), where the two choices
agree; the analysis applies verbatim to both. We carry $\boldsymbol{\nabla}_s$
through the derivation below; unless stated otherwise, identities written with
$\varepsilon(\cdot)$ hold verbatim for $\tfrac12\nabla(\cdot)$.

Multiplying \eqref{eq:stokes_1} and \eqref{eq:stokes_2} by test functions $\mathbf{v}$ and $q$, respectively, summing,
integrating over the domain, and integrating the stress term by parts, we obtain
\begin{equation}
    \int_\Omega \nabla \mathbf{v} : \boldsymbol{\sigma}(\mathbf{u},p) \, dx - \int_{\partial\Omega} \mathbf{n} \cdot \boldsymbol{\sigma}(\mathbf{u},p)\cdot\mathbf{v} \, ds - \int_\Omega q \, \nabla \cdot \mathbf{u} \, dx = \int_\Omega \mathbf{v} \cdot \mathbf{f} \, dx .
\end{equation}
On $\Gamma_N$ the boundary integrand $\mathbf{n}\cdot\boldsymbol{\sigma}(\mathbf{u},p)\cdot\mathbf{v}$ vanishes
by the do-nothing condition \eqref{eq:stokes_bc_n} and is dropped; only the contribution on $\Gamma_D$
remains. To impose the Dirichlet condition $\mathbf{u} = \mathbf{u}_d$ weakly there, we apply Nitsche's method,
adding symmetric consistency and penalty terms on the Dirichlet boundary $\Gamma_D$:
\begin{equation}
    \begin{split}
        \int_\Omega \nabla \mathbf{v} : \boldsymbol{\sigma}(\mathbf{u},p) \, dx - \int_\Omega q \, \nabla \cdot \mathbf{u} \, dx
        - \int_{\Gamma_D} \mathbf{n} \cdot \boldsymbol{\sigma}(\mathbf{u},p)\cdot\mathbf{v} \, ds                                                            \\
        - \int_{\Gamma_D} \mathbf{n} \cdot \boldsymbol{\sigma}(\mathbf{v},q)\cdot\mathbf{u} \, ds + \alpha \int_{\Gamma_D} \mathbf{u} \cdot \mathbf{v} \, ds \\
        = \int_\Omega \mathbf{v} \cdot \mathbf{f} \, dx
        - \int_{\Gamma_D} \mathbf{n} \cdot \boldsymbol{\sigma}(\mathbf{v},q)\cdot\mathbf{u}_d \, ds + \alpha \int_{\Gamma_D} \mathbf{u}_d \cdot \mathbf{v} \, ds .
    \end{split}
\end{equation}
Finally, we apply the SBM shift: the volume integrals are evaluated on the surrogate domain $\Omega\pr$, and the
boundary integrals on the surrogate boundary $\Gamma\pr$ with its normal $\mathbf{n}\pr$. To impose the boundary
conditions consistently from the surrogate boundary, the traces of $\mathbf{u}$ and $\mathbf{v}$ in the Nitsche terms
are replaced by their extensions $\mathbf{u}_p$ and $\mathbf{v}_p$ evaluated at the projected points on the true
boundary. This yields:
\begin{equation}
    \begin{split}
        \int_{\Omega \pr} \nabla \mathbf{v} : \boldsymbol{\sigma}(\mathbf{u},p) \, dx - \int_{\Omega \pr} q \, \nabla \cdot \mathbf{u} \, dx
        - \int_{\Gamma \pr} \mathbf{n}\pr \cdot \boldsymbol{\sigma}(\mathbf{u},p)\cdot\mathbf{v} \, ds                                                                   \\
        -\int_{\Gamma \pr} \mathbf{n}\pr \cdot \boldsymbol{\sigma}(\mathbf{v},q)\cdot\mathbf{u}_p \, ds + \alpha \int_{\Gamma \pr} \mathbf{u}_p \cdot \mathbf{v}_p \, ds \\
        = \int_{\Omega \pr} \mathbf{v} \cdot \mathbf{f} \, dx
        - \int_{\Gamma \pr} \mathbf{n}\pr \cdot \boldsymbol{\sigma}(\mathbf{v},q)\cdot\mathbf{u}_d \, ds + \alpha \int_{\Gamma \pr} \mathbf{u}_d \cdot \mathbf{v}_p \, ds .
    \end{split}
    \label{eq:sbm_stokes_compact}
\end{equation}
Note that $\mathbf{u}_p$ and $\mathbf{v}_p$ need not be extended in the same manner; the specific choices made in our
implementation are discussed in Section~\ref{sec:sbm_assembly}, and the asymptotic-symmetry analysis of
Section~\ref{sec:theory} (Lemma~\ref{lem:perturbation}) is carried out for that specific extension pair. Expanding $\boldsymbol{\sigma}$ into its definition
recovers the full discretisation of \cite{atallah2020analysis}:
\begin{equation}
    \begin{split}
        \int_{\Omega \pr} \nabla \mathbf{v} : 2 \varepsilon(\mathbf{u}) \, dx - \int_{\Omega \pr} (\nabla \cdot \mathbf{v}) \, p \, dx - \int_{\Omega \pr} q \, \nabla \cdot \mathbf{u} \, dx
        - \int_{\Gamma \pr} \mathbf{n}\pr \cdot (2 \varepsilon(\mathbf{u})\mathbf{v}) \, ds + \int_{\Gamma \pr} (\mathbf{n}\pr \cdot \mathbf{v}) \, p \, ds                                                                     \\
        -\int_{\Gamma \pr} \mathbf{n}\pr \cdot (2 \varepsilon(\mathbf{v})\mathbf{u}_p) \, ds + \int_{\Gamma \pr} (\mathbf{n}\pr \cdot \mathbf{u}_p) \, q \, ds + \alpha \int_{\Gamma \pr} \mathbf{u}_p \cdot \mathbf{v}_p \, ds \\
        = \int_{\Omega \pr} \mathbf{v} \cdot \mathbf{f} \, dx
        - \int_{\Gamma \pr} \mathbf{n}\pr \cdot (2 \varepsilon(\mathbf{v})\mathbf{u}_d) \, ds + \int_{\Gamma \pr} (\mathbf{n}\pr \cdot \mathbf{u}_d) \, q \, ds + \alpha \int_{\Gamma \pr} \mathbf{u}_d \cdot \mathbf{v}_p \, ds .
    \end{split}
    \label{eq:sbm_stokes_full}
\end{equation}
For the continuous-pressure formulation with the pressure-gradient stabilisation of
\cite{atallah2020analysis}, this formulation is coercive, continuous, and satisfies the LBB inf-sup
condition. For the inf-sup stable pairs employed in this work (the discontinuous-pressure
$\pairDGthree$ and $\pairDGtwo$ pairs and the continuous-pressure Taylor--Hood pairs), we assume that
the corresponding well-posedness result of \cite{atallah2020analysis} continues to hold; a proof for
these specific pairs is beyond the scope of the present work. We make this precise as
Assumption~\ref{ass:wellposed} below and verify the resulting solver behaviour experimentally in
Section~\ref{sec:results}.

\subsubsection*{Algebraic stability and the inf-sup condition}

The numerical solution of the incompressible Stokes equations poses challenges absent from symmetric positive-definite
problems such as the Poisson equation. The coupling of velocity and pressure through the incompressibility constraint
leads to a saddle-point system of the form
\begin{equation}
    \begin{bmatrix} A & B^T \\ B & 0 \end{bmatrix}
    \begin{bmatrix} \mathbf{u} \\ \mathbf{p} \end{bmatrix}
    =
    \begin{bmatrix} \mathbf{f} \\ \mathbf{g} \end{bmatrix},
\end{equation}
where $A$ is the discrete viscous operator, $B$ is the discrete divergence operator, and $\mathbf{u}$, $\mathbf{p}$
denote the coefficient vectors of the discrete velocity and pressure. The zero diagonal block renders the system
indefinite, ruling out standard iterative methods designed for elliptic problems, such as the Conjugate Gradient
method.

A fundamental requirement for the well-posedness of such systems is the \emph{inf-sup condition}, also known as the
Ladyzhenskaya--Babu\v{s}ka--Brezzi (LBB) condition \cite{Elman2014-gj}. For a mixed discretisation with velocity space
$V_h$ and pressure space $Q_h$, the discrete inf-sup condition requires the existence of a constant $\beta > 0$,
independent of the mesh size $h$, such that
\begin{equation}
    \inf_{q_h \in Q_h} \sup_{\mathbf{v}_h \in V_h}
    \frac{b(\mathbf{v}_h, q_h)}{\|\mathbf{v}_h\|_{V} \|q_h\|_{Q}} \geq \beta > 0,
\end{equation}
where $b(\mathbf{v}_h, q_h) = -\int_\Omega q_h \nabla \cdot \mathbf{v}_h \, dx$ is the bilinear form associated with
the incompressibility constraint.

The inf-sup condition ensures that the pressure space $Q_h$ is not ``too large'' relative to the velocity space $V_h$:
every pressure mode must be ``seen'' by some velocity field through the divergence operator. When the condition is
violated, $B^T$ has a non-trivial kernel within $Q_h$, leading to spurious pressure modes and a rank-deficient
saddle-point system. The condition restricts the admissible velocity--pressure pairings; the classical Taylor--Hood
element ($\mathbb{Q}_k / \mathbb{Q}_{k-1}$, $k \geq 2$) is a well-known inf-sup stable pair on quadrilateral meshes
\cite{Elman2014-gj}.

In the SBM setting, verifying the inf-sup condition requires additional care, because the boundary terms introduced by
the Taylor expansion at the surrogate boundary modify both $A$ and $B$ relative to their body-fitted counterparts.
Atallah, Canuto, and Scovazzi \cite{atallah2020analysis} proved that the SBM-Stokes formulation satisfies a discrete
inf-sup condition for appropriate element pairs, including the $\pairDGthree$ and $\pairDGtwo$ pairs used in this work.
High-order pairs are particularly attractive for SBM, since the accuracy of the boundary shift relies on the order of
the Taylor expansion; however, they also increase the number of degrees of freedom and the density of the system matrix
\cite{atallah2022high}, motivating the robust preconditioning strategies discussed next.

\subsubsection*{Preconditioning and scalability}
The high-order discretisation of the SBM-Stokes system leads to large, sparse linear systems. As the mesh is refined
to capture geometric detail, the memory and time requirements of direct solvers such as sparse LU factorisation scale
poorly, making them impractical for large-scale applications. Krylov subspace methods such as GMRES or MINRES are then
the methods of choice; their convergence, however, depends on the conditioning of the system matrix, which
deteriorates as the mesh size $h$ decreases.

Preconditioning is essential. According to the operator-preconditioning framework of Mardal and Winther
\cite{PreconditioningPDEs}, an effective preconditioner should reflect the Hilbert-space structure of the underlying
partial differential equation, yielding preconditioned operators with $h$-independent spectral bounds.

For the Stokes problem, this is commonly realised through block preconditioning \cite{BlockPreconditioners,
    BlockPrec2}. The principal difficulty is approximating the Schur complement, which encodes the pressure--velocity
coupling. A well-established remedy is to use the pressure mass matrix as a Schur complement approximation, which
yields mesh-independent convergence for standard Stokes discretisations \cite{Elman2014-gj}. While geometric multigrid
techniques for SBM have recently been developed \cite{wichrowski2025geometric, shypatch}, the application of
block-diagonal and block-triangular preconditioners to high-order SBM-Stokes discretisations has remained largely
unexplored. This work demonstrates that iteration counts remain low and stable under mesh refinement, preserving the
efficiency of the SBM approach as the geometric complexity increases.

\subsection{Block preconditioners}

The SBM discretisation of the Stokes equations yields large, ill-conditioned saddle-point systems, particularly under
mesh refinement. The well-posedness of the formulation and the LBB inf-sup condition provide the algebraic structure
that allows block preconditioners to achieve mesh-independent convergence, as established by the theory of Mardal and
Winther \cite{PreconditioningPDEs}.

For a saddle-point system
\begin{equation}
    \begin{bmatrix}
        A & B^T \\
        B & 0
    \end{bmatrix}
    \begin{bmatrix}
        \mathbf{u} \\
        \mathbf{p}
    \end{bmatrix}
    =
    \begin{bmatrix}
        \mathbf{f} \\
        \mathbf{g}
    \end{bmatrix},
\end{equation}
with $A$ invertible on the kernel of $B$, a family of block preconditioners can be defined \cite{BlockPrec2} as
\begin{equation}
    \mathcal{P} =
    \begin{bmatrix}
        I          &   \\
        cBA_0^{-1} & I
    \end{bmatrix}
    \begin{bmatrix}
        A_{0} &        \\
              & -S_{0}
    \end{bmatrix}
    \begin{bmatrix}
        I & dA_0^{-1}B^T \\
          & I
    \end{bmatrix},
\end{equation}
where $A_0$ and $S_0$ are symmetric positive-definite matrices and $c,d$ are prescribed real numbers. For
mesh-independent convergence, $A_0$ and $S_0$ should be spectrally equivalent to $A$ and to the Schur complement
$S = BA^{-1}B^T$, respectively \cite{PreconditioningPDEs}; under these conditions, such preconditioners guarantee
$h$-independent convergence rates for inf-sup stable discretisations \cite{BlockPreconditioners}. Setting $c=0$ and
$d=1$ yields the block upper-triangular preconditioner
\begin{equation}
    \mathcal{P} =
    \begin{bmatrix}
        A_{0} & B^T    \\
              & -S_{0}
    \end{bmatrix} .
\end{equation}
Efficient approximate inverses are available for both blocks. In this work we use the exact velocity block,
$A_0 = A$; the framework is nevertheless compatible with the geometric multigrid (GMG) strategies and \emph{Shy Patch}
smoothers introduced in \cite{wichrowski2025geometric,shypatch}, which could replace the exact solve. For $S_0$ we use
the pressure mass matrix $M$, which is inexpensive to assemble and to invert. The preconditioner employed in this work
is
\begin{equation}
    \mathcal{P} =
    \begin{bmatrix}
        A & B^T \\
          & -M
    \end{bmatrix} .
    \label{eq:prec_used}
\end{equation}

\subsection{Field-of-values analysis of the preconditioned system}
\label{sec:theory}

The classical operator-preconditioning theory \cite{PreconditioningPDEs} and the spectral analysis of block
preconditioners \cite{BlockPreconditioners} are formulated for symmetric saddle-point problems. The SBM-Stokes system
is non-symmetric: the Taylor corrections act on trial and test functions differently, so the discrete divergence blocks
are not transposes of each other and the velocity block has a non-trivial skew-symmetric part. The full coupled
bilinear form is \emph{not} coercive (numerical evaluation of the eigenvalues of its symmetric part shows that they may
be negative \cite{atallah2020analysis}), so a naive energy-norm argument on the whole system is not available. The
appropriate tool for non-symmetric problems is the field-of-values (FOV) analysis of the preconditioned operator
\cite{starke1997fov,klawonn1999fov,loghin2004analysis}, which we develop here. The key structural observation is that
\emph{every} non-symmetric term in the SBM formulation carries at least one factor of the distance vector $\mathbf{d}$
between the surrogate and true boundaries, and is asymptotically small under mesh refinement.

We write the discrete problem abstractly as: find $[\mathbf{u}_h, p_h] \in V_h \times Q_h$ such that
\begin{equation}
    \mathcal{B}([\mathbf{u}_h,p_h];[\mathbf{v}_h,q_h])
    = a(\mathbf{u}_h,\mathbf{v}_h) + b_1(\mathbf{v}_h,p_h) + b_2(\mathbf{u}_h,q_h)
    = \ell(\mathbf{v}_h,q_h)
    \qquad \forall [\mathbf{v}_h,q_h] \in V_h \times Q_h,
\end{equation}
where $a$ collects the viscous volume term and all velocity boundary terms,
and $b_1$, $b_2$ collect the pressure--velocity couplings, including their SBM boundary
corrections. We equip $V_h$ with the energy norm $\|\cdot\|_a$ of
\cite{atallah2020analysis}, which is equivalent to the $H^1$-norm on $V_h$, and $Q_h$ with
the scaled $L^2$-norm $\|q\|_Q = \|q/\sqrt{2\mu}\|_{0,\tilde\Omega}$, whose Gram matrix is
the pressure mass matrix $M$.

Rather than tying the analysis to a particular element pair, we make the following general assumption.

\begin{assumption}[well-posed pair]
    \label{ass:wellposed}
    There exist constants $C_a, C_A, \beta, C_b > 0$, independent of $h$, such that
    \begin{align}
        a(\mathbf{v}_h,\mathbf{v}_h)                                  & \geq C_a \|\mathbf{v}_h\|_a^2 ,
                                                                      & |a(\mathbf{u}_h,\mathbf{v}_h)|
                                                                      & \leq C_A \|\mathbf{u}_h\|_a \|\mathbf{v}_h\|_a ,
        \label{eq:ass_a}                                                                                                     \\
        \adjustlimits \inf_{q_h \in Q_h} \sup_{\mathbf{v}_h \in V_h}
        \frac{b_2(\mathbf{v}_h,q_h)}{\|\mathbf{v}_h\|_a \, \|q_h\|_Q} & \geq \beta ,
                                                                      & |b_i(\mathbf{v}_h,q_h)|
                                                                      & \leq C_b \|\mathbf{v}_h\|_a \|q_h\|_Q , \quad i=1,2.
        \label{eq:ass_b}
    \end{align}
\end{assumption}

For the SBM-Stokes formulation with continuous pressure (and the pressure-gradient stabilisation of
\cite{atallah2020analysis}), Assumption~\ref{ass:wellposed} is verified in \cite{atallah2020analysis}: coercivity and
continuity of $a$ are established in their Theorem on coercivity (for Nitsche, tangential, and divergence penalty
parameters $\alpha,\beta,\gamma_2$ large enough), and the uniform inf-sup follows from their LBB theorem, whose proof
constructs the supremising velocity via a Scott--Zhang quasi-interpolant vanishing on the surrogate Dirichlet boundary.
For the inf-sup stable pairs employed in this work (the discontinuous-pressure $\pairDGthree$ and $\pairDGtwo$ pairs
and the continuous-pressure Taylor--Hood pairs), a proof of Assumption~\ref{ass:wellposed} is beyond the present scope;
we take it as a hypothesis, on the expectation that the argument of \cite{atallah2020analysis} extends to these pairs,
and verify the resulting solver behaviour experimentally in Section~\ref{sec:results}.

The second ingredient is the geometric resolution assumption of \cite{atallah2020analysis} (their Assumption 4): there
exist $c_d > 0$ and $\zeta > 0$ such that
\begin{equation}
    \|\mathbf{d}(\tilde{\mathbf{x}})\| \leq c_d \, h_T \hat{h}_T^{\zeta},
    \qquad \hat{h}_T = h_T / l(\tilde\Omega),
    \label{eq:ass_d}
\end{equation}
for all points $\tilde{\mathbf{x}}$ on the surrogate boundary. This expresses that the
distance between the surrogate and true boundaries decays marginally faster than $h$, and
is the natural quantification of the mesh being ``fine enough to resolve the geometry''.

\begin{lemma}[asymptotic symmetry]
    \label{lem:perturbation}
    Let \eqref{eq:ass_d} hold, let the trial and test functions be extended by the
    elementwise polynomial extrapolation $\mathbf{u}^F$ and the first-order Taylor
    extension $\mathbf{v}^T$ of Section~\ref{sec:sbm_assembly}, and let the Nitsche
    penalty scale as $\alpha \leq c_\alpha\, 2\mu\, h_T^{-1}$. In the symmetric-gradient
    case $\boldsymbol{\nabla}_s = \varepsilon$ assume in addition the smallness condition
    $\hat{h}_{\tilde\Gamma_D} \leq \hat{h}_0$; in the regular-gradient case
    $\boldsymbol{\nabla}_s = \tfrac12\nabla$ no such restriction is required (see Step~1 of
    the proof). Set $\varepsilon_h = C \hat{h}_{\tilde\Gamma_D}^{\zeta}$
    for a constant $C$ independent of $h$. Then
    \begin{align}
        |a(\mathbf{u}_h,\mathbf{v}_h) - a(\mathbf{v}_h,\mathbf{u}_h)|
         & \leq \varepsilon_h \, \|\mathbf{u}_h\|_a \|\mathbf{v}_h\|_a ,
        \label{eq:skew_a}                                                \\
        |b_1(\mathbf{v}_h,q_h) - b_2(\mathbf{v}_h,q_h)|
         & \leq \varepsilon_h \, \|\mathbf{v}_h\|_a \|q_h\|_Q .
        \label{eq:skew_b}
    \end{align}
\end{lemma}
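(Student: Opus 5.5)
The plan is to write out the two skew parts explicitly from \eqref{eq:sbm_stokes_full}, using $\mathbf{u}_p = \mathbf{u}^F = \mathbf{u} + (\nabla\mathbf{u})\mathbf{d} + R^F(\mathbf{u})$ for the trial function and $\mathbf{v}_p = \mathbf{v}^T = \mathbf{v} + (\nabla\mathbf{v})\mathbf{d}$ for the test function. Here $R^F$ collects the higher-order terms of the elementwise polynomial extrapolation; each of them carries at least $\|\mathbf{d}\|^2$ times a derivative of order at least two. Substituting these expansions, the volume terms and the $\mathbf{d}$-independent parts of the boundary terms cancel in $a(\mathbf{u},\mathbf{v})-a(\mathbf{v},\mathbf{u})$ and in $b_1-b_2$. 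The standard Nitsche consistency/symmetry pair is symmetric by construction, and $b_1(\mathbf{v},q)=-\int (\nabla\cdot\mathbf{v})q+\int(\mathbf{n}'\cdot\mathbf{v})q$ matches the $\mathbf{d}$-free part of $b_2$.

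\textbf{Step 1: the viscous mismatch.} In the regular-gradient case the consistency terms $\int_{\Gamma'} \mathbf{n}'\cdot\nabla\mathbf{u}\cdot\mathbf{v}$ and $\int_{\Gamma'}\mathbf{n}'\cdot\nabla\mathbf{v}\cdot\mathbf{u}$ are exact transposes. In the symmetric-gradient case the same holds with $\varepsilon$, since $\mathbf{n}'\cdot 2\varepsilon(\mathbf{u})\mathbf{v}$ is symmetric under the swap. The remaining care needed for $\varepsilon$ is that coercivity/continuity in $\|\cdot\|_a$ controls $\varepsilon(\mathbf{u})$ only through Korn's inequality on the surrogate domain. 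Uniformity of that inequality requires the surrogate geometry to be regular enough, which is where $\hat h_{\tilde\Gamma_D}\le\hat h_0$ enters.

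\textbf{Step 2: the skew remainders.} After the cancellations, the skew remainders are of three kinds:
\[
\int_{\Gamma'} \mathbf{n}'\cdot 2\boldsymbol{\nabla}_s\mathbf{v}\cdot \bigl((\nabla\mathbf{u})\mathbf{d}+R^F(\mathbf{u})\bigr)\,ds
\]
minus its swap, the penalty mismatch $\alpha\int_{\Gamma'}(\mathbf{u}^F\cdot\mathbf{v}^T - \mathbf{v}^F\cdot\mathbf{u}^T)$, and, for the $b$-forms, $\int_{\Gamma'}(\mathbf{n}'\cdot((\nabla\mathbf{v})\mathbf{d}+R^F(\mathbf{v})))\,q$. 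Each of these is estimated by Cauchy--Schwarz on $\Gamma'$, with $\|\mathbf{d}\|\le c_d h_T\hat h_T^\zeta$ pulled out pointwise. Discrete trace and inverse inequalities then convert boundary norms into volume norms on the surrogate-boundary cells:
\[
\|\nabla^k \mathbf{w}\|_{0,\partial T}\lesssim h_T^{-1/2-k+1}\|\nabla\mathbf{w}\|_{0,T}, \qquad \|q\|_{0,\partial T}\lesssim h_T^{-1/2}\|q\|_{0,T}.
\]
Each factor $\|\mathbf{d}\|\sim h_T\hat h_T^\zeta$ exactly compensates one extra derivative, which costs $h_T^{-1}$. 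The penalty $\alpha\le c_\alpha 2\mu h_T^{-1}$ compensates the $h_T$ gained from the two half-power trace factors. In every term one is left with $\hat h_T^{\zeta}$ (or a higher power) times $\|\mathbf{u}\|_a\|\mathbf{v}\|_a$ or $\|\mathbf{v}\|_a\|q\|_Q$. The $1/\sqrt{2\mu}$ scaling of $\|\cdot\|_Q$ absorbs $\mu$. Summing over boundary cells with the discrete Cauchy--Schwarz inequality and taking the maximum of $\hat h_T$ gives $\varepsilon_h=C\hat h_{\tilde\Gamma_D}^\zeta$.

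\textbf{Main obstacle.} The hard part is the penalty mismatch, because the two extensions differ. The term
\[
\alpha\bigl(\mathbf{u}^F\cdot\mathbf{v}^T-\mathbf{v}^F\cdot\mathbf{u}^T\bigr)
\]
has $\mathbf{u}\cdot\mathbf{v}$ and the first-order terms $(\nabla\mathbf{u}\,\mathbf{d})\cdot\mathbf{v}+\mathbf{u}\cdot(\nabla\mathbf{v}\,\mathbf{d})$ cancelling exactly. What remains is the asymmetric higher-order part $R^F(\mathbf{u})\cdot\mathbf{v}^T - R^F(\mathbf{v})\cdot\mathbf{u}^T$, which carries $\alpha\|\mathbf{d}\|^2$ and hence $h_T\hat h_T^{2\zeta}$ times second derivatives. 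The plan is to check that the powers balance, giving $\hat h^{2\zeta}\le\hat h^\zeta$. To do so, use the inverse estimate to bound $\mathbf{u}^F$ uniformly on the extrapolation patch; this needs $\|\mathbf{d}\|\lesssim h_T$, which \eqref{eq:ass_d} guarantees.
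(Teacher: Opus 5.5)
Your route is the paper's: you split off the $\mathbf{d}$-free symmetric parts, bound the remaining boundary terms by weighted Cauchy--Schwarz with discrete trace and inverse inequalities, and reduce the penalty mismatch to $\alpha\langle R^F(\mathbf{u}),\mathbf{v}^T\rangle-\alpha\langle R^F(\mathbf{v}),\mathbf{u}^T\rangle$. You also correctly place the one delicate point of the symmetric-gradient case at Korn's inequality. However, you do not supply the argument there, and the reason you give for $\hat h_{\tilde\Gamma_D}\le\hat h_0$ (the surrogate geometry being regular enough for a uniform Korn constant) is not the mechanism.

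In Step~2 your estimates end with $\sqrt{2\mu}\,\|\nabla\mathbf{u}\|_{0;\tilde\Omega}$ and $\sqrt{2\mu}\,\|\nabla\mathbf{v}\|_{0;\tilde\Omega}$, not with $\|\cdot\|_a$. The increments $(\nabla\mathbf{v})\mathbf{d}$ involve the full gradient, including rotational parts that $\varepsilon$ does not see. (The norm controls $\varepsilon$ directly; it is $\nabla$ that needs Korn.) You therefore need the uniform bound $\sqrt{2\mu}\,\|\nabla\mathbf{v}\|_{0;\tilde\Omega}\le C_K\|\mathbf{v}\|_a$. This does not follow from the $H^1$-equivalence of $\|\cdot\|_a$ in \cite{atallah2020analysis}, which is not uniform in $h$. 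Nor does it follow from Korn's inequality with boundary control by itself: that inequality needs $l(\tilde\Omega)^{-1}\|\mathbf{v}\|^2_{0;\Gamma'}$, whereas the energy norm only contains the Taylor-extended trace $\mathbf{v}^T=\mathbf{v}+(\nabla\mathbf{v})\mathbf{d}$.

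The missing argument is an absorption. Write $\mathbf{v}=\mathbf{v}^T-(\nabla\mathbf{v})\mathbf{d}$ on $\Gamma'$. The penalty constituent of the norm gives $l(\tilde\Omega)^{-1}\|\mathbf{v}^T\|^2_{0;\Gamma'}\le\hat h\,(2\mu)^{-1}\|\mathbf{v}\|_a^2$. The weighted trace estimate with $\|\mathbf{d}\|\le c_d h_T\hat h^{\zeta}$ gives $l(\tilde\Omega)^{-1}\|(\nabla\mathbf{v})\mathbf{d}\|^2_{0;\Gamma'}\le C\hat h^{1+2\zeta}\|\nabla\mathbf{v}\|^2_{0;\tilde\Omega}$. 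Hence $2\mu\|\nabla\mathbf{v}\|^2_{0;\tilde\Omega}\le C\|\mathbf{v}\|_a^2+C\hat h\,2\mu\|\nabla\mathbf{v}\|^2_{0;\tilde\Omega}$, and the last term is absorbed once $\hat h\le\hat h_0$. This is exactly where the smallness condition enters, and why it is unnecessary for $\tfrac12\nabla$, where the norm contains $\|\nabla\mathbf{v}\|$ directly.

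Two minor points. First, in the penalty mismatch, take the factor $\alpha^{1/2}\|\mathbf{v}^T\|_{0;\Gamma'}$ directly from that same norm constituent. Your trace/inverse inequalities hold only for $k\ge1$ and cannot bound a zeroth-order trace by $\|\nabla\mathbf{v}\|$. Second, the integrand $\mathbf{n}'\cdot2\varepsilon(\mathbf{u})\mathbf{v}$ is not symmetric under the swap by itself; only the sum of the consistency and adjoint-consistency terms is.
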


\begin{proof}
    Throughout, $T$ denotes a generic element with a face on the surrogate boundary, $C$
    a generic positive constant depending only on the shape regularity of the mesh, the
    polynomial degrees, and the constants $c_d$, $\zeta$, $c_\alpha$ introduced below
    (but never on $h$), and $\hat h := \hat h_{\tilde\Gamma_D} = \max\{\hat h_T :
        \partial T \cap \Gamma\pr \neq \emptyset\}$. We abbreviate
    $\langle \cdot,\cdot \rangle := \langle \cdot,\cdot \rangle_{\Gamma\pr}$.

    Recall from Section~\ref{sec:sbm_assembly} the two extensions in use: trial functions are extended by elementwise
    polynomial extrapolation, $\mathbf{u}^F(\bar{\mathbf{x}}) := \mathbf{u}\big(\bar{\mathbf{x}} +
        \mathbf{d}(\bar{\mathbf{x}})\big)$, and test functions by the first-order Taylor expansion $\mathbf{v}^T := \mathbf{v}
        + (\nabla \mathbf{v})\,\mathbf{d}$. We write $\delta\mathbf{u} := \mathbf{u}^F - \mathbf{u}$ for the extension
    increment and $\mathbf{r}_u := \mathbf{u}^F - \mathbf{u}^T$ for the first-order Taylor remainder. Since $\mathbf{u}$ is
    an elementwise polynomial, on the segment $S(\bar{\mathbf{x}}) := [\bar{\mathbf{x}}, \bar{\mathbf{x}} + \mathbf{d}]$ we
    have the pointwise bounds
    \begin{equation}
        |\delta\mathbf{u}(\bar{\mathbf{x}})| \leq \|\mathbf{d}\| \max_{S(\bar{\mathbf{x}})} |\nabla \mathbf{u}| ,
        \qquad
        |\mathbf{r}_u(\bar{\mathbf{x}})| \leq \tfrac12 \|\mathbf{d}\|^2 \max_{S(\bar{\mathbf{x}})} |D^2 \mathbf{u}| .
        \label{eq:pointwise_taylor}
    \end{equation}
    We use the following standard facts:
    \begin{itemize}

        \item[(i)] the elementwise discrete trace inequality
              $\|\varphi\|_{0;\partial T \cap \Gamma\pr}^2 \leq C h_T^{-1} \|\varphi\|_{0;T}^2$ for
              elementwise polynomials $\varphi$;
        \item[(ii)] by \eqref{eq:ass_d} and shape regularity, the
              segment $S(\bar{\mathbf{x}})$ is contained in a uniformly bounded enlargement of $T$,
              on which the equivalence of norms on finite-dimensional polynomial spaces and inverse
              estimates give $\max_{S} |\varphi| \leq C h_T^{-d/2} \|\varphi\|_{0;T}$ and
              $\max_{S} |\nabla\varphi| \leq C h_T^{-1-d/2} \|\varphi\|_{0;T}$;

        \item[(iii)] the
              resolution assumption $\|\mathbf{d}\|_{L^\infty(\partial T \cap \Gamma\pr)} \leq c_d
                  h_T \hat h^{\zeta}$;
        \item[(iv)] the properties of the energy norm of
              \cite{atallah2020analysis} (their Eq.~(21)):
              $\sqrt{2\mu}\,\|\varepsilon(\mathbf{v})\|_{0;\tilde\Omega} \leq \|\mathbf{v}\|_a$ and
              the Taylor-extended trace term $\sqrt{2\mu/h}\,\|\mathbf{v}^T\|_{0;\Gamma\pr} \leq
                  \|\mathbf{v}\|_a$ are constituents of the norm, so that $\alpha^{1/2}
                  \|\mathbf{v}^T\|_{0;\Gamma\pr} \leq \sqrt{c_\alpha}\,\|\mathbf{v}\|_a$ for a penalty
              parameter scaling as $\alpha \leq c_\alpha\, 2\mu\, h_T^{-1}$.
    \end{itemize}
    We will also use the
    uniform gradient bound
    \begin{equation}
        \sqrt{2\mu}\,\|\nabla \mathbf{v}\|_{0;\tilde\Omega} \leq C_K \|\mathbf{v}\|_a ,
        \label{eq:korn_uniform}
    \end{equation}
    with $C_K$ independent of $h$, established in Step 1 below. We stress that
    \eqref{eq:korn_uniform} does not follow from the equivalence of $\|\cdot\|_a$ with
    the full $H^1$-norm proved in \cite{atallah2020analysis} (their Proposition~2),
    since that equivalence is not uniform with respect to the mesh size; it is only the
    gradient seminorm that admits a uniform bound. This is the sole point in the analysis
    where the symmetric structure of $\boldsymbol{\nabla}_s = \varepsilon$ is used. For the
    regular-gradient choice $\boldsymbol{\nabla}_s = \tfrac12\nabla$ of
    \eqref{eq:stress_tensor}, the energy norm controls $\|\nabla\mathbf{v}\|$ directly, so
    \eqref{eq:korn_uniform} holds trivially with $C_K = 1$, Step 1 below is vacuous, and the
    associated smallness restriction $\hat h \leq \hat h_0$ is not needed; all remaining
    estimates are unchanged.

    \emph{Step 1: weighted boundary estimates.} For any elementwise polynomial $\varphi$,
    (i) and (iii) give the weighted trace estimate
    \begin{equation}
        \big\| \|\mathbf{d}\|^{1/2} \varphi \big\|_{0;\Gamma\pr}^2
        \;\leq\; \sum_T c_d h_T \hat h^{\zeta} \cdot C h_T^{-1} \|\varphi\|_{0;T}^2
        \;\leq\; C \hat h^{\zeta} \|\varphi\|_{0;\tilde\Omega}^2 .
        \label{eq:weighted_trace}
    \end{equation}
    For the extension increment, \eqref{eq:pointwise_taylor}, (ii), (iii) and
    $|\partial T \cap \Gamma\pr| \leq C h_T^{d-1}$ yield
    \begin{equation*}
        \big\| \|\mathbf{d}\|^{-1/2} \delta\mathbf{u} \big\|_{0;\partial T \cap \Gamma\pr}^2
        \leq \|\mathbf{d}\|_{L^\infty} \max_S |\nabla\mathbf{u}|^2 \, |\partial T \cap \Gamma\pr|
        \leq c_d h_T \hat h^\zeta \cdot C h_T^{-d} \|\nabla \mathbf{u}\|_{0;T}^2 \cdot C h_T^{d-1} ,
    \end{equation*}
    and summation over the boundary elements gives
    \begin{equation}
        \big\| \|\mathbf{d}\|^{-1/2} \delta\mathbf{u} \big\|_{0;\Gamma\pr}^2
        \leq C \hat h^{\zeta} \|\nabla \mathbf{u}\|_{0;\tilde\Omega}^2 ;
        \label{eq:increment_est}
    \end{equation}
    the same bound holds for the Taylor increment $(\nabla\mathbf{u})\mathbf{d}$,
    directly from \eqref{eq:weighted_trace}. Analogously, for the second-order remainder,
    \eqref{eq:pointwise_taylor} together with
    $$\max_S|D^2\mathbf{u}| \leq C h_T^{-1-d/2}\|\nabla\mathbf{u}\|_{0;T}$$ gives
    $\|\mathbf{r}_u\|_{0;\partial T \cap \Gamma\pr}^2 \leq C (c_d h_T \hat h^{\zeta})^4
        h_T^{-2-d} \|\nabla \mathbf{u}\|_{0;T}^2 \, h_T^{d-1} = C \hat h^{4\zeta} h_T
        \|\nabla \mathbf{u}\|_{0;T}^2$, whence, by the scaling of $\alpha$,
    \begin{equation}
        \alpha \, \|\mathbf{r}_u\|_{0;\Gamma\pr}^2
        \leq C \hat h^{4\zeta} \, 2\mu \, \|\nabla \mathbf{u}\|_{0;\tilde\Omega}^2 .
        \label{eq:remainder_est}
    \end{equation}
    We can now prove \eqref{eq:korn_uniform}. Korn's inequality with boundary control
    \cite{atallah2020analysis}
    (their Eq.~(A.11a))
    gives
    $\|\nabla\mathbf{v}\|_{0;\tilde\Omega}^2 \leq C \big(
        \|\varepsilon(\mathbf{v})\|_{0;\tilde\Omega}^2 + l(\tilde\Omega)^{-1}
        \|\mathbf{v}\|_{0;\Gamma\pr}^2 \big)$. Splitting the trace as $\mathbf{v} =
        \mathbf{v}^T - (\nabla\mathbf{v})\mathbf{d}$, the first part is controlled by the
    penalty constituent of the norm, $l(\tilde\Omega)^{-1} \|\mathbf{v}^T\|_{0;\Gamma\pr}^2
        \leq \hat h \, h^{-1} \|\mathbf{v}^T\|_{0;\Gamma\pr}^2 \leq \hat h \, (2\mu)^{-1}
        \|\mathbf{v}\|_a^2$, while \eqref{eq:weighted_trace} and (iii) bound the second part
    by $l(\tilde\Omega)^{-1} \|(\nabla\mathbf{v})\mathbf{d}\|_{0;\Gamma\pr}^2 \leq C \hat
        h^{1+2\zeta} \|\nabla\mathbf{v}\|_{0;\tilde\Omega}^2$. Hence
    \begin{equation*}
        2\mu\, \|\nabla\mathbf{v}\|_{0;\tilde\Omega}^2
        \leq C \|\mathbf{v}\|_a^2 + C \hat h\, 2\mu\, \|\nabla\mathbf{v}\|_{0;\tilde\Omega}^2 ,
    \end{equation*}
    and the last term can be absorbed into the left-hand side once $\hat h \leq \hat
        h_0$, with $\hat h_0$ depending only on the constants in (i)--(iv); this is no
    restriction, since the conclusions of the lemma are only invoked in the asymptotic
    regime of Theorem~\ref{thm:fov}.

    \emph{Step 2: the divergence mismatch \eqref{eq:skew_b}.} From
    \eqref{eq:sbm_stokes_full},
    \begin{equation*}
        b_1(\mathbf{v},q) = -\int_{\Omega\pr} (\nabla\cdot\mathbf{v})\, q \, dx + \langle \mathbf{n}\pr \cdot \mathbf{v},\, q \rangle ,
        \qquad
        b_2(\mathbf{v},q) = -\int_{\Omega\pr} (\nabla\cdot\mathbf{v})\, q \, dx + \langle \mathbf{n}\pr \cdot \mathbf{v}^F,\, q \rangle ,
    \end{equation*}
    so that $b_1(\mathbf{v},q) - b_2(\mathbf{v},q) = -\langle \mathbf{n}\pr \cdot
        \delta\mathbf{v},\, q\rangle$. The Cauchy--Schwarz inequality with the weights
    $(2\mu/\|\mathbf{d}\|)^{1/2}$ and $(\|\mathbf{d}\|/2\mu)^{1/2}$, followed by
    \eqref{eq:increment_est}, \eqref{eq:weighted_trace} and \eqref{eq:korn_uniform}, gives
    \begin{equation*}
        |\langle \mathbf{n}\pr \cdot \delta\mathbf{v}, q \rangle|
        \leq \big\|(2\mu/\|\mathbf{d}\|)^{1/2}\, \delta \mathbf{v}\big\|_{0;\Gamma\pr}
        \big\|(\|\mathbf{d}\|/2\mu)^{1/2}\, q\big\|_{0;\Gamma\pr}
        \leq C \hat h^{\zeta} \, \sqrt{2\mu}\,\|\nabla\mathbf{v}\|_{0;\tilde\Omega} \, \|q\|_Q
        \leq C \hat h^{\zeta} \|\mathbf{v}\|_a \|q\|_Q ,
    \end{equation*}
    which is \eqref{eq:skew_b}.

    \emph{Step 3: the skew part of $a$, \eqref{eq:skew_a}.} From
    \eqref{eq:sbm_stokes_full},
    \begin{equation*}
        a(\mathbf{u},\mathbf{v}) = \int_{\Omega\pr} 2\mu\, \varepsilon(\mathbf{u}) : \varepsilon(\mathbf{v}) \, dx
        - \langle 2\mu\, \mathbf{n}\pr \cdot \varepsilon(\mathbf{u}),\, \mathbf{v} \rangle
        - \langle 2\mu\, \mathbf{n}\pr \cdot \varepsilon(\mathbf{v}),\, \mathbf{u}^F \rangle
        + \alpha \langle \mathbf{u}^F,\, \mathbf{v}^T \rangle ,
    \end{equation*}
    where the volume term is symmetric since $\nabla\mathbf{v} :
        \varepsilon(\mathbf{u}) = \varepsilon(\mathbf{v}) : \varepsilon(\mathbf{u})$. Hence
    \begin{equation*}
        a(\mathbf{u},\mathbf{v}) - a(\mathbf{v},\mathbf{u})
        = \langle 2\mu\, \mathbf{n}\pr\cdot\varepsilon(\mathbf{u}),\, \delta\mathbf{v} \rangle
        - \langle 2\mu\, \mathbf{n}\pr\cdot\varepsilon(\mathbf{v}),\, \delta\mathbf{u} \rangle
        + \alpha \big( \langle \mathbf{u}^F, \mathbf{v}^T \rangle - \langle \mathbf{v}^F, \mathbf{u}^T \rangle \big) .
    \end{equation*}
    The consistency mismatches are estimated as in Step 2, with the roles of the weights
    exchanged:
    \begin{equation*}
        |\langle 2\mu\, \mathbf{n}\pr\cdot\varepsilon(\mathbf{u}),\, \delta\mathbf{v} \rangle|
        \leq \big\| (2\mu \|\mathbf{d}\|)^{1/2} \varepsilon(\mathbf{u})\,\mathbf{n}\pr \big\|_{0;\Gamma\pr}
        \big\| (2\mu/\|\mathbf{d}\|)^{1/2} \delta\mathbf{v} \big\|_{0;\Gamma\pr}
        \leq C \hat h^{\zeta} \|\mathbf{u}\|_a \|\mathbf{v}\|_a ,
    \end{equation*}
    by \eqref{eq:weighted_trace} applied to $\varepsilon(\mathbf{u})$,
    \eqref{eq:increment_est}, and (iv). For the penalty mismatch, substituting
    $\mathbf{u}^F = \mathbf{u}^T + \mathbf{r}_u$ and
    $\mathbf{v}^F = \mathbf{v}^T + \mathbf{r}_v$ and using the symmetry of
    $\langle \mathbf{u}^T, \mathbf{v}^T \rangle$,
    \begin{equation*}
        \alpha \big( \langle \mathbf{u}^F, \mathbf{v}^T \rangle - \langle \mathbf{v}^F, \mathbf{u}^T \rangle \big)
        = \alpha \langle \mathbf{r}_u, \mathbf{v}^T \rangle - \alpha \langle \mathbf{r}_v, \mathbf{u}^T \rangle ,
    \end{equation*}
    and by the Cauchy--Schwarz inequality, \eqref{eq:remainder_est}, (iv) and \eqref{eq:korn_uniform},
    \begin{equation*}
        \alpha\, |\langle \mathbf{r}_u, \mathbf{v}^T \rangle|
        \leq \big( \alpha^{1/2} \|\mathbf{r}_u\|_{0;\Gamma\pr} \big)
        \big( \alpha^{1/2} \|\mathbf{v}^T\|_{0;\Gamma\pr} \big)
        \leq C \hat h^{2\zeta} \, \sqrt{2\mu}\, \|\nabla\mathbf{u}\|_{0;\tilde\Omega} \, \|\mathbf{v}\|_a
        \leq C \hat h^{2\zeta} \|\mathbf{u}\|_a \|\mathbf{v}\|_a .
    \end{equation*}
    Since $\hat h \leq 1$ we have $\hat h^{2\zeta} \leq \hat h^{\zeta}$, and collecting
    the estimates proves \eqref{eq:skew_a} with $\varepsilon_h = C \hat h^{\zeta}$. We
    remark that if trial and test functions are extended in the same manner, the penalty
    mismatch vanishes identically and the same proof applies with $\delta$ replaced by
    the corresponding increment; on a body-fitted mesh ($\mathbf{d} \equiv \mathbf{0}$)
    all mismatch terms vanish and the formulation is symmetric.
\end{proof}

Lemma~\ref{lem:perturbation} states that the discrete system matrix admits the splitting
\begin{equation}
    \mathcal{A} =
    \begin{bmatrix}
        A & B_1^T \\ B_2 & 0
    \end{bmatrix}
    =
    \underbrace{\begin{bmatrix}
            A_s & B^T \\ B & 0
        \end{bmatrix}}_{\textstyle \mathcal{A}_s}
    + \; \mathcal{E}_h ,
    \label{eq:splitting}
\end{equation}
where $A_s$ is the symmetric part of $A$, $B = \tfrac12(B_1 + B_2)$, and the perturbation
$\mathcal{E}_h$ has norm $O(\varepsilon_h)$ with respect to the norms of
Assumption~\ref{ass:wellposed}. The symmetric leading part $\mathcal{A}_s$ satisfies the
classical Brezzi conditions with constants $C_a$, $C_A$,
$\beta - \varepsilon_h/2$, $C_b$, so for $h$ small enough it is a standard, uniformly
stable Stokes-type saddle-point matrix. (The velocity coercivity constant is
unchanged, since $\mathbf{v}^{T} A_s \mathbf{v} = \mathbf{v}^{T} A \mathbf{v} =
    a(\mathbf{v},\mathbf{v}) \geq C_a \|\mathbf{v}\|_a^2$; only the inf-sup constant of the
symmetrised coupling $b = \tfrac12(b_1+b_2)$ is reduced, by at most
$\varepsilon_h/2$, as a consequence of \eqref{eq:skew_b}.)

\begin{theorem}[mesh-independent FOV bounds]
    \label{thm:fov}
    Let Assumption~\ref{ass:wellposed}, \eqref{eq:ass_d}, and the extension and penalty
    assumptions of Lemma~\ref{lem:perturbation} hold, and let
    \begin{equation*}
        \mathcal{P} =
        \begin{bmatrix}
            A & B_1^T \\ & -M
        \end{bmatrix}
    \end{equation*}
    be the block upper-triangular preconditioner with exact velocity block. Then there
    exist $h_0 > 0$ and constants $0 < \gamma \leq \Gamma$, depending only on
    $C_a, C_A, \beta, C_b$, such that for all $h \leq h_0$ the matrix $\mathcal{P}$ is
    invertible and the field of values of the right-preconditioned operator
    $\mathcal{A}\mathcal{P}^{-1}$ satisfies, in the inner product induced by the
    block-diagonal matrix $\mathcal{H} = \operatorname{diag}(A_s^{-1}, M^{-1})$,
    \begin{equation}
        \gamma \leq
        \frac{\langle \mathcal{A}\mathcal{P}^{-1} y, y \rangle_{\mathcal{H}}}{\langle y, y \rangle_{\mathcal{H}}}
        \qquad \text{and} \qquad
        \|\mathcal{A}\mathcal{P}^{-1} y\|_{\mathcal{H}} \leq \Gamma \|y\|_{\mathcal{H}}
        \qquad \forall y \neq 0 .
        \label{eq:fov}
    \end{equation}
    GMRES applied to the right-preconditioned system in the
    $\mathcal{H}$-inner product reduces the residual by a factor of at least
    $\left(1 - \gamma^2/\Gamma^2\right)^{1/2}$ per iteration, independently of the mesh
    size \cite{starke1997fov,eisenstat1983variational}.
\end{theorem}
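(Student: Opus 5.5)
We will reduce everything to two scalar quantities: how far $\mathcal{A}\mathcal{P}^{-1}$ is from the identity, and how large it is. The key algebraic identity comes from the block structure. Write $y=\mathcal{P}x$ with $x=[\mathbf{u};\mathbf{p}]$, so that $y=[A\mathbf{u}+B_1^T\mathbf{p};\,-M\mathbf{p}]$. Then
\begin{equation*}
\mathcal{A}\mathcal{P}^{-1}y=\mathcal{A}x=\begin{bmatrix}A\mathbf{u}+B_1^T\mathbf{p}\\ B_2\mathbf{u}\end{bmatrix},
\qquad\text{so}\qquad
\mathcal{A}\mathcal{P}^{-1}=\begin{bmatrix} I & 0\\ B_2A^{-1} & I-B_2A^{-1}B_1^TM^{-1}\end{bmatrix}.
\end{equation*}
Invertibility of $\mathcal{P}$ is immediate, since $A$ is invertible by the coercivity in \eqref{eq:ass_a} and $M$ is SPD. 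The whole analysis therefore reduces to the off-diagonal block $B_2A^{-1}$ and the Schur-type block $I-S_{12}M^{-1}$, where $S_{12}:=B_2A^{-1}B_1^T$.

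\textbf{Upper bound.} We pass to the variational norms: $\|\cdot\|_{A_s^{-1}}$ is the dual norm of $\|\cdot\|_{A_s}$, and $\|\cdot\|_{M^{-1}}$ is the dual norm of $\|\cdot\|_Q$. Coercivity and continuity of $a$ give $\|A^{-1}\|_{A_s^{-1}\to A_s}\le C_a^{-1}$. Continuity of $b_1$ and $b_2$, together with the equivalence of $\|\cdot\|_{A_s}$ and $\|\cdot\|_a$ (constants $C_a$ and $C_A$), bounds $\|B_2A^{-1}\|$ and $\|S_{12}M^{-1}\|$ by constants in $C_a, C_A, C_b$. This yields $\Gamma$.

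\textbf{Lower bound.} We expand, with $y=[f;g]$,
\begin{equation*}
\langle\mathcal{A}\mathcal{P}^{-1}y,y\rangle_{\mathcal{H}}=\|f\|^2_{A_s^{-1}}+\langle B_2A^{-1}f,g\rangle_{M^{-1}}+\langle(I-S_{12}M^{-1})g,g\rangle_{M^{-1}}.
\end{equation*}
The cross term is bounded by $c\|f\|\,\|g\|$ and absorbed by Young's inequality, provided the last term dominates $\delta\|g\|^2_{M^{-1}}$ for some $\delta>0$. This step is the main obstacle.

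Setting $\mathbf{w}=M^{-1}g$, the last term equals $\mathbf{w}^TM\mathbf{w}-\mathbf{w}^TS_{12}\mathbf{w}$. We compare it with the symmetric model $S_s=BA_s^{-1}B^T$, which satisfies $\beta_s^2 M\le S_s\le (C_b^2/C_a)M$ with $\beta_s=\beta-\varepsilon_h/2$. Lemma~\ref{lem:perturbation} gives $\|A-A_s\|\le\varepsilon_h/2$ and $\|B_i-B\|\le\varepsilon_h/2$ in the relevant operator norms. A resolvent estimate then yields $|\mathbf{w}^T(S_{12}-S_s)\mathbf{w}|\le C\varepsilon_h\,\mathbf{w}^TM\mathbf{w}$.

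However, $I-S_sM^{-1}$ is not sign-definite in general, because $S_s$ may exceed $M$ (the scale $C_b^2/C_a$ can be larger than one). There are two ways to handle this. The first is a rescaling of the mass matrix, $M\to\theta M$ with $\theta\ge C_b^2/C_a$, which the statement's constants seem to permit; this is the route we would try first. The second, if the preconditioner must stay exactly $-M$, is to work instead in the weighted inner product $\operatorname{diag}(A_s^{-1},\omega M^{-1})$ and use the factorization to show positivity of the Hermitian part of $I-S_{12}M^{-1}$ against $S_{12}$ itself. In that case one exploits $\mathbf{w}^TS_{12}\mathbf{w}\ge(\beta_s^2-C\varepsilon_h)\|\mathbf{w}\|_M^2$ in the spirit of the Klawonn--Starke argument \cite{klawonn1999fov}.

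\textbf{Choice of $h_0$.} We choose $h_0$ so that $C\varepsilon_h$ stays below half of the relevant margin, so that every constant depends only on $C_a, C_A, \beta, C_b$. The GMRES rate then follows directly from the Elman/Starke bound \cite{starke1997fov,eisenstat1983variational} applied in the $\mathcal{H}$-inner product.
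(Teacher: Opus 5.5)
\textbf{There is a genuine gap, and it sits in the lower bound.} It has two parts.

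\textbf{First, your block formula is wrong, and the obstacle you build on it is an artefact.} With $y=[f;g]$ you get $\mathbf{p}=-M^{-1}g$ and $\mathbf{u}=A^{-1}(f+B_1^TM^{-1}g)$. The pressure component of $\mathcal{A}x$ is therefore $B_2\mathbf{u}=B_2A^{-1}f+S_{12}M^{-1}g$. So the lower-right block of $\mathcal{A}\mathcal{P}^{-1}$ is $S_{12}M^{-1}$, not $I-S_{12}M^{-1}$. The Schur term in the field of values is $\mathbf{w}^TS_{12}\mathbf{w}$ with $\mathbf{w}=M^{-1}g$. This is bounded below directly by the (perturbed) inf-sup constant, which is exactly why the sign $-M$ is needed. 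The upper spectral bound $C_b^2/C_a$ enters only $\Gamma$, never the positivity, so the issue of $S_s$ exceeding $M$ never arises. Both of your remedies would prove a different statement anyway. Rescaling $M\to\theta M$ changes the preconditioner. The weighted product $\operatorname{diag}(A_s^{-1},\omega M^{-1})$ changes $\mathcal{H}$, and a field-of-values lower bound does not transfer between inner products.

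\textbf{Second, the step you treat as routine is where the real difficulty lies.} Suppose you bound the cross term by $c\,\|f\|_{A_s^{-1}}\|g\|_{M^{-1}}$ with $c\sim\sigma_+=C_b/\sqrt{C_a}$, and the Schur term from below by $\delta\|g\|_{M^{-1}}^2$ with $\delta\sim\sigma_-^2$. The quadratic form $\|f\|^2-c\|f\|\|g\|+\delta\|g\|^2$ is positive only if $c^2<4\delta$, i.e.\ roughly $\sigma_+<2\sigma_-$. That is a condition-number restriction which fails for general $C_a,C_A,\beta,C_b$; ``some $\delta>0$'' is not enough.

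The missing idea is the exact link between the off-diagonal block and the Schur block in the symmetric core $\mathcal{A}_s\mathcal{P}_s^{-1}$. Take $z=D^{-1}y$ with $D=\operatorname{diag}(A_s^{1/2},M^{1/2})$ and $K=M^{-1/2}BA_s^{-1/2}$. The core then becomes $X=\begin{bmatrix} I & 0\\ K & KK^T\end{bmatrix}$. The cross term is $\langle z_1,K^Tz_2\rangle\le\|z_1\|\,\|K^Tz_2\|$, and $\|K^Tz_2\|^2$ is exactly the Schur term. Hence $\langle Xz,z\rangle\ge\tfrac12(\|z_1\|^2+\|K^Tz_2\|^2)\ge\tfrac12\min\{1,\sigma_-^2\}\|z\|^2$, whatever the ratio $\sigma_+/\sigma_-$.

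This link holds exactly only when the same $A_s$ and the same $B$ appear in both blocks. For $\mathcal{A}\mathcal{P}^{-1}$ itself, the blocks $B_2A^{-1}$ and $B_2A^{-1}B_1^T$ are not linked this way. Comparing only $S_{12}$ with $S_s$, as you propose, is therefore insufficient. Instead, prove the bound for $\mathcal{A}_s\mathcal{P}_s^{-1}$ first. Then treat the whole difference $\mathcal{A}\mathcal{P}^{-1}-\mathcal{A}_s\mathcal{P}_s^{-1}=\mathcal{E}\mathcal{P}^{-1}-\mathcal{A}_s\mathcal{P}_s^{-1}\mathcal{F}\mathcal{P}^{-1}$, cross block included, as an $O(\varepsilon_h)$ perturbation in the transformed norm, absorbed for $h\le h_0$.

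Your invertibility argument (block triangular with coercive $A$ and SPD $M$) is correct and simpler than the paper's Neumann series. Your upper bound is also fine.
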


\begin{proof}
    Set $D := \operatorname{diag}(A_s^{1/2}, M^{1/2})$, so that $\mathcal{H} = D^{-2}$.
    For any matrix $T$ and $y = Dz$ we have
    $\langle Ty, y \rangle_{\mathcal{H}} = \langle D^{-1}TD\, z, z\rangle$ and
    $\|y\|_{\mathcal{H}} = \|z\|$: the $\mathcal{H}$-field of values of $T$ is the
    Euclidean field of values of $D^{-1}TD$, and we work with the transformed matrices
    throughout. We record the norm equivalences implied by \eqref{eq:ass_a},
    \begin{equation}
        C_a \|\mathbf{v}\|_a^2 \leq \|\mathbf{v}\|_{A_s}^2 = a(\mathbf{v},\mathbf{v}) \leq C_A \|\mathbf{v}\|_a^2 ,
        \qquad \|q\|_M = \|q\|_Q ,
        \label{eq:norm_equiv}
    \end{equation}
    and fix $h_1$ such that $\varepsilon_h \leq \beta$ for $h \leq h_1$, so that the
    inf-sup constant below does not degenerate.

    \emph{Step 1: the symmetrised pressure--velocity coupling.} Let
    $B = \tfrac12(B_1 + B_2)$ as in \eqref{eq:splitting}, with associated form
    $b = \tfrac12(b_1 + b_2)$, and set $K := M^{-1/2} B A_s^{-1/2}$. By
    Lemma~\ref{lem:perturbation}, $|b(\mathbf{v},q) - b_2(\mathbf{v},q)| \leq \tfrac12
        \varepsilon_h \|\mathbf{v}\|_a \|q\|_Q$, so $b$ inherits from \eqref{eq:ass_b} the
    continuity constant $C_b$ and, for $h \leq h_1$, the inf-sup constant
    $\beta_h := \beta - \varepsilon_h/2 \geq \beta/2$. For any $q$ with
    $\hat q = M^{1/2} q$,
    \begin{equation*}
        \|K^T \hat q\| = \sup_{\mathbf{v}} \frac{q^T B \mathbf{v}}{\|\mathbf{v}\|_{A_s}}
        = \sup_{\mathbf{v}} \frac{b(\mathbf{v}, q)}{\|\mathbf{v}\|_{A_s}} ,
    \end{equation*}
    so \eqref{eq:norm_equiv} gives
    $\sigma_- \|\hat q\| \leq \|K^T \hat q\| \leq \sigma_+ \|\hat q\|$ with
    \begin{equation*}
        \sigma_- := \frac{\beta}{2\sqrt{C_A}} , \qquad \sigma_+ := \frac{C_b}{\sqrt{C_a}} .
    \end{equation*}
    Equivalently, the symmetrised Schur complement
    $S_s := B A_s^{-1} B^T = M^{1/2} K K^T M^{1/2}$ satisfies
    \begin{equation}
        \sigma_-^2 \, \|q\|_M^2 \;\leq\; q^T S_s\, q \;\leq\; \sigma_+^2 \, \|q\|_M^2 ,
        \label{eq:schur_equiv}
    \end{equation}
    i.e.\ the pressure mass matrix is spectrally equivalent to the Schur complement with
    $h$-independent constants.

    \emph{Step 2: the symmetric core.} Let $\mathcal{A}_s$ be the symmetric part from
    \eqref{eq:splitting} and $\mathcal{P}_s := \begin{bmatrix} A_s & B^T \\ & -M
        \end{bmatrix}$. Block back-substitution for $\mathcal{P}_s [\hat{\mathbf{u}}; \hat p]
        = [\mathbf{r}_u; \mathbf{r}_p]$ gives $\hat p = -M^{-1}\mathbf{r}_p$ and
    $\hat{\mathbf{u}} = A_s^{-1}(\mathbf{r}_u + B^T M^{-1} \mathbf{r}_p)$, whence
    \begin{equation*}
        \mathcal{A}_s \mathcal{P}_s^{-1} =
        \begin{bmatrix}
            I & 0 \\ B A_s^{-1} & S_s M^{-1}
        \end{bmatrix},
        \qquad
        X := D^{-1} \mathcal{A}_s \mathcal{P}_s^{-1} D =
        \begin{bmatrix}
            I & 0 \\ K & K K^T
        \end{bmatrix} .
    \end{equation*}
    The negative sign on the Schur block of $\mathcal{P}$ is essential here: it produces
    the positive-semidefinite block $K K^T$, which is in fact positive definite by
    \eqref{eq:schur_equiv}; with $+M$ in place of $-M$ the lower diagonal block would be
    $-K K^T$, the field of values would contain the origin, and no bound of the form
    \eqref{eq:fov} could hold. For $z = [z_1; z_2]$ set $w := K^T z_2$. Then, by the
    Young inequality $|\langle z_1, w\rangle| \leq \tfrac12(\|z_1\|^2 + \|w\|^2)$ and
    $\|w\| \geq \sigma_- \|z_2\|$,
    \begin{equation*}
        \langle X z, z \rangle = \|z_1\|^2 + \langle z_1, w \rangle + \|w\|^2
        \geq \tfrac12 \left( \|z_1\|^2 + \|w\|^2 \right)
        \geq \gamma_s \|z\|^2 ,
        \qquad
        \gamma_s := \tfrac12 \min\{ 1, \sigma_-^2 \} ,
    \end{equation*}
    while splitting $X$ into its three non-zero blocks gives
    $\|X\| \leq 1 + \sigma_+ + \sigma_+^2 =: \Gamma_s$. Thus \eqref{eq:fov} holds for the
    symmetric core $\mathcal{A}_s \mathcal{P}_s^{-1}$ with constants
    $(\gamma_s, \Gamma_s)$.

    \emph{Step 3: perturbation.} Let
    \begin{equation*}
        \mathcal{E} := \mathcal{A} - \mathcal{A}_s =
        \begin{bmatrix}
            A - A_s & B_1^T - B^T \\ B_2 - B & 0
        \end{bmatrix},
        \qquad
        \mathcal{F} := \mathcal{P} - \mathcal{P}_s =
        \begin{bmatrix}
            A - A_s & B_1^T - B^T \\ 0 & 0
        \end{bmatrix} .
    \end{equation*}
    By Lemma~\ref{lem:perturbation},
    $\mathbf{w}^T (A - A_s) \mathbf{v} = \tfrac12 \big( a(\mathbf{v},\mathbf{w}) -
        a(\mathbf{w},\mathbf{v}) \big) \leq \tfrac12 \varepsilon_h \|\mathbf{v}\|_a
        \|\mathbf{w}\|_a$, and $B_1 - B = -(B_2 - B) = \tfrac12 (B_1 - B_2)$ is bounded by
    $\tfrac12 \varepsilon_h$ in the same pair of norms, so \eqref{eq:norm_equiv} yields a
    constant $c_1 = c_1(C_a)$ with
    \begin{equation*}
        \|D^{-1} \mathcal{E} D^{-1}\| \leq c_1 \varepsilon_h ,
        \qquad
        \|D^{-1} \mathcal{F} D^{-1}\| \leq c_1 \varepsilon_h .
    \end{equation*}
    The matrix $\Pi := D^{-1} \mathcal{P}_s D^{-1} = \begin{bmatrix} I & K^T \\ & -I
        \end{bmatrix}$ satisfies $\Pi^2 = I$, hence
    $\|\Pi^{-1}\| = \|\Pi\| \leq 1 + \sigma_+$. Since
    $D^{-1} \mathcal{P} D^{-1} = \Pi + D^{-1}\mathcal{F}D^{-1}$, a Neumann series shows
    that whenever $c_1 (1+\sigma_+) \varepsilon_h \leq \tfrac12$, the preconditioner
    $\mathcal{P}$ is invertible with $\|D \mathcal{P}^{-1} D\| \leq 2(1+\sigma_+)$. From
    the identity $\mathcal{A}\mathcal{P}^{-1} - \mathcal{A}_s \mathcal{P}_s^{-1} =
        \mathcal{E}\mathcal{P}^{-1} - \mathcal{A}_s \mathcal{P}_s^{-1} \mathcal{F}
        \mathcal{P}^{-1}$ we obtain, in the transformed variables,
    \begin{equation*}
        \Delta := D^{-1} \big( \mathcal{A}\mathcal{P}^{-1} - \mathcal{A}_s\mathcal{P}_s^{-1} \big) D
        = \big( D^{-1}\mathcal{E}D^{-1} \big) \big( D\mathcal{P}^{-1}D \big)
        - X \big( D^{-1}\mathcal{F}D^{-1} \big) \big( D\mathcal{P}^{-1}D \big) ,
    \end{equation*}
    whence $\|\Delta\| \leq c_1 (1 + \Gamma_s)\, 2(1+\sigma_+)\, \varepsilon_h =: c_2
        \varepsilon_h$. The Euclidean field of values is stable under such perturbations:
    \begin{equation*}
        \langle (X + \Delta) z, z \rangle \geq (\gamma_s - \|\Delta\|) \|z\|^2 ,
        \qquad
        \|(X + \Delta) z\| \leq (\Gamma_s + \|\Delta\|) \|z\| .
    \end{equation*}
    Since $\varepsilon_h = C \hat h_{\tilde\Gamma_D}^{\zeta} \to 0$ as $h \to 0$, we may
    choose $h_0 \leq h_1$ such that $c_1(1+\sigma_+)\varepsilon_h \leq \tfrac12$ and
    $c_2 \varepsilon_h \leq \gamma_s/2$ for all $h \leq h_0$; then \eqref{eq:fov} holds
    with $\gamma := \gamma_s/2$ and $\Gamma := \Gamma_s + \gamma_s/2$, which depend only
    on $C_a, C_A, \beta, C_b$. Finally, the residual reduction factor of GMRES follows
    from the bound of Eisenstat, Elman and Schultz \cite{eisenstat1983variational}, which
    holds verbatim in any inner product \cite{starke1997fov}.
\end{proof}

\begin{remark}[choice of inner product]
    \label{rem:innerproduct}
    Theorem~\ref{thm:fov} bounds the convergence of GMRES implemented with right
    preconditioning in the $\mathcal{H}$-inner product, which measures residuals in the
    natural dual norm $\|\mathbf{r}_u\|_{A_s^{-1}}^2 + \|\mathbf{r}_p\|_{M^{-1}}^2$.
    Evaluating this inner product requires one solve with $A_s$ and one with $M$ per
    orthogonalisation step; since the preconditioner already performs exact solves with
    $A$ and $M$, this is realisable at comparable cost. If, as is common in practice, GMRES is
    run in the Euclidean inner product instead, the residual bound transfers with a
    norm-equivalence factor:
    \begin{equation*}
        \|r_k\|_2 \;\leq\; \kappa(\mathcal{H})^{1/2}
        \left(1 - \gamma^2/\Gamma^2\right)^{k/2} \|r_0\|_2 ,
    \end{equation*}
    since the Euclidean-optimal GMRES polynomial is at least as good as the
    $\mathcal{H}$-optimal one, up to the equivalence of the two norms. The factor
    $\kappa(\mathcal{H})^{1/2} = O(h^{-1})$ is not mesh-independent, but it enters only as
    a multiplicative constant, not in the contraction rate: the number of iterations
    needed to reach a fixed relative tolerance grows at most by an additive
    $O(\log(1/h))$ term.
\end{remark}

\begin{corollary}[inexact velocity block]
    \label{cor:inexact}
    Let the assumptions of Theorem~\ref{thm:fov} hold, and let $A_0$ be an invertible,
    possibly non-symmetric, approximation of $A$ whose error propagation operator is a
    uniform contraction in the energy norm: there exists an $h$-independent
    $\rho < \tfrac12$ such that
    \begin{equation}
        \| v - A_0^{-1} A\, v \|_{A_s} \;\leq\; \rho \, \|v\|_{A_s}
        \qquad \forall v .
        \label{eq:inexact_A}
    \end{equation}
    Then the conclusions of Theorem~\ref{thm:fov} hold for the preconditioner
    $$\mathcal{P}_0 = \begin{bmatrix} A_0 & B_1^T \\ & -M \end{bmatrix},$$
    with constants $\gamma, \Gamma$ depending additionally only on $\rho$.
    In particular, a multigrid V-cycle for the SBM velocity block whose energy-norm
    contraction number is bounded by $\rho < \tfrac12$ uniformly in $h$
    preserves mesh-independent GMRES convergence of the outer solver.
\end{corollary}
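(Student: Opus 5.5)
We treat $\mathcal{P}_0$ as a further perturbation of the preconditioner $\mathcal{P}$ of Theorem~\ref{thm:fov}, but since $A_0 - A$ is not small in norm (only $\rho < \tfrac12$), we cannot absorb it into the $O(\varepsilon_h)$ Neumann-series argument of Step~3. Instead we work directly with the transformed matrices $D^{-1}\mathcal{A}\mathcal{P}_0^{-1}D$, with $D = \operatorname{diag}(A_s^{1/2}, M^{1/2})$ as before.

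\emph{Block structure.} Block back-substitution for $\mathcal{P}_0$ gives $\hat p = -M^{-1}\mathbf{r}_p$ and $\hat{\mathbf{u}} = A_0^{-1}(\mathbf{r}_u + B_1^T M^{-1}\mathbf{r}_p)$. Hence
\begin{equation*}
\mathcal{A}\mathcal{P}_0^{-1} = \begin{bmatrix} AA_0^{-1} & (AA_0^{-1}-I)B_1^TM^{-1} \\ B_2A_0^{-1} & B_2A_0^{-1}B_1^TM^{-1} \end{bmatrix}.
\end{equation*}

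\emph{Transformed operator.} Set $E := I - A_0^{-1}A$. Hypothesis \eqref{eq:inexact_A} says $\|A_s^{1/2}EA_s^{-1/2}\|\le\rho$. We write $AA_0^{-1} = A(A_0^{-1}A)A^{-1} = A(I-E)A^{-1}$, and we note that $\|A_s^{-1/2}AA_s^{-1/2}\| \le C_A/C_a$. Moreover $A_s^{-1/2}AA_s^{-1/2} = I + \text{skew}$ with the skew part of size $O(\varepsilon_h)$ by Lemma~\ref{lem:perturbation}, so it is invertible with a uniformly bounded inverse. The transformed first column is therefore $G := A_s^{-1/2}AA_0^{-1}A_s^{1/2} = \hat A(I-\hat E)\hat A^{-1}$, where $\hat A = I + O(\varepsilon_h)$ and $\|\hat E\|\le\rho$. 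Thus $G = I - \hat E + O(\varepsilon_h)$.

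\emph{Reduction to a defect $Y$.} Define $K_i$ from $B_i$ analogously to $K$. Up to $O(\varepsilon_h)$ terms, which we treat exactly as in Step~3 of Theorem~\ref{thm:fov}, the transformed operator is $X + Y$ with
\begin{equation*}
Y = \begin{bmatrix} -\hat E & -\hat E K^T \\ -K\hat E & -K\hat E K^T \end{bmatrix}.
\end{equation*}
Indeed, the lower-right block is $K(I-\hat E)K^T$. Writing $w = K^Tz_2$, the form is
\begin{equation*}
\langle Yz,z\rangle = -\langle \hat E(z_1+w), z_1\rangle - \langle \hat E(z_1+w), w\rangle = -\langle \hat E(z_1+w), z_1+w\rangle.
\end{equation*}
This is the key identity. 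We then bound
\begin{equation*}
|\langle Yz,z\rangle| \le \rho\|z_1+w\|^2 \le 2\rho(\|z_1\|^2+\|w\|^2).
\end{equation*}

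\emph{Coercivity and upper bound.} We need a sharper lower bound for $X$ than the one in Step~2: $\langle Xz,z\rangle = \|z_1\|^2+\langle z_1,w\rangle+\|w\|^2$. Combining the two,
\begin{equation*}
\langle (X+Y)z,z\rangle = \|z_1\|^2+\langle z_1,w\rangle+\|w\|^2 - \langle\hat E(z_1+w),z_1+w\rangle.
\end{equation*}
Here I expect the main obstacle: $\tfrac12(\|z_1\|^2+\|w\|^2)$ is not obviously dominated by $\rho\|z_1+w\|^2$, since $\|z_1+w\|^2$ can be as large as $2(\|z_1\|^2+\|w\|^2)$. I would instead use the exact quadratic form $\|z_1\|^2+\langle z_1,w\rangle+\|w\|^2 \ge \tfrac12\|z_1+w\|^2 + \tfrac12(\|z_1\|^2+\|w\|^2)\cdot 0$, or more precisely
\begin{equation*}
\|z_1\|^2+\langle z_1,w\rangle+\|w\|^2 = \tfrac34\|z_1+w\|^2 + \tfrac14\|z_1-w\|^2.
\end{equation*}
This gives
\begin{equation*}
\langle (X+Y)z,z\rangle \ge (\tfrac34-\rho)\|z_1+w\|^2 + \tfrac14\|z_1-w\|^2 \ge c(\rho)(\|z_1\|^2+\|w\|^2),
\end{equation*}
with $c(\rho) = \min(\tfrac34-\rho,\tfrac14) > 0$. (The hypothesis $\rho < \tfrac12$ is more than enough here; presumably the authors used a cruder split requiring it.) Using $\|w\|\ge\sigma_-\|z_2\|$ then yields $\gamma_0 = c(\rho)\min\{1,\sigma_-^2\}$. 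The upper bound is immediate: $\|Y\|\le\rho(1+\sigma_+)^2$.

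\emph{Invertibility and small-$h$ perturbation.} Invertibility of $\mathcal{P}_0$ follows from that of $A_0$ and $M$ directly. The $O(\varepsilon_h)$ remainder terms need $\|D\mathcal{P}_0^{-1}D\|$ bounded, which holds because $\|A_s^{1/2}A_0^{-1}A_s^{1/2}\| = \|(I-\hat E)\hat A^{-1}\| \le (1+\rho)\cdot O(1)$. Choosing $h_0$ so that these remainders are at most $\gamma_0/2$ completes the proof, and the GMRES rate follows exactly as in Theorem~\ref{thm:fov}.
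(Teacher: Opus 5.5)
Your proof is correct and follows the paper's argument in substance. The paper computes the transformed symmetric-core operator $X_0=\begin{bmatrix} G & (G-I)K^T \\ KG & KGK^T\end{bmatrix}$ with $G=A_s^{1/2}A_0^{-1}A_s^{1/2}$; this is your $X+Y$ with $-\hat E$ replaced by $G-I$, where $\|G-I\|\le\rho+C\varepsilon_h$ absorbs the skew part of $A$. It then derives the same identity $\langle X_0z,z\rangle=\langle Gs,s\rangle-\langle w,z_1\rangle$ with $s=z_1+w$, and handles the remaining $O(\varepsilon_h)$ terms as in Step~3 of Theorem~\ref{thm:fov}.

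The one substantive difference is that you use the exact split $\tfrac34\|z_1+w\|^2+\tfrac14\|z_1-w\|^2$ instead of the paper's equal-weight Young inequality. This gives coercivity for all $\rho<\tfrac34$, which is the sharp threshold for the form $(1-\rho)\|s\|^2-\langle s,w\rangle+\|w\|^2$. It therefore confirms the paper's own remark that $\tfrac12$ is only an artefact of its proof.
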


\begin{proof}
    We follow the proof of Theorem~\ref{thm:fov}, indicating only the changes; all
    notation is as introduced there. By Lemma~\ref{lem:perturbation} and
    \eqref{eq:norm_equiv}, $\hat H := A_s^{-1/2} A A_s^{-1/2} = I + N_A$ with
    $\|N_A\| \leq \varepsilon_h / C_a$. Set $G := A_s^{1/2} A_0^{-1} A_s^{1/2}$;
    assumption \eqref{eq:inexact_A} states exactly $\|I - G \hat H\| \leq \rho$, whence
    $\|G\| \leq (1+\rho) \|\hat H^{-1}\| \leq (1+\rho)(1 - \varepsilon_h/C_a)^{-1}$ and
    \begin{equation*}
        \|I - G\| \leq \|I - G \hat H\| + \|G\| \, \|\hat H - I\|
        \leq \rho + C \varepsilon_h =: \tilde\rho .
    \end{equation*}
    Decreasing $h_0$ if necessary, $\tilde\rho < \tfrac12$ for all $h \leq h_0$; in
    particular $G$ is invertible with $\|G^{-1}\| \leq (1 - \tilde\rho)^{-1} \leq 2$.

    Define the symmetric-core preconditioner $\mathcal{P}_{0,s} := \begin{bmatrix} A_0 & B^T \\ & -M \end{bmatrix}$ (only the
    coupling block is symmetrised). Back-substitution as in Step 2 of
    Theorem~\ref{thm:fov} gives
    \begin{equation*}
        X_0 := D^{-1} \mathcal{A}_s \mathcal{P}_{0,s}^{-1} D =
        \begin{bmatrix}
            G & (G - I) K^T \\ K G & K G K^T
        \end{bmatrix} .
    \end{equation*}
    For $z = [z_1; z_2]$ set $w := K^T z_2$ and $s := z_1 + w$, so that
    $X_0 z = [G s - w ;\; K G s]$. Writing $G = I + N$ with $\|N\| \leq \tilde\rho$ and
    using the Young inequality,
    \begin{equation*}
        \langle X_0 z, z \rangle
        = \langle G s, s \rangle - \langle w, z_1 \rangle
        = \|s\|^2 + \langle N s, s \rangle - \langle w, s \rangle + \|w\|^2
        \geq \big( \tfrac12 - \tilde\rho \big) \|s\|^2 + \tfrac12 \|w\|^2
        \geq \big( \tfrac12 - \tilde\rho \big) \big( \|s\|^2 + \|w\|^2 \big) .
    \end{equation*}
    Since $\|z_1\|^2 \leq 2(\|s\|^2 + \|w\|^2)$ and
    $\|z_2\|^2 \leq \sigma_-^{-2} \|w\|^2$, we conclude
    \begin{equation*}
        \langle X_0 z, z \rangle \geq \gamma_0 \|z\|^2 ,
        \qquad
        \gamma_0 := \frac{\tfrac12 - \tilde\rho}{2 + \sigma_-^{-2}} > 0 ,
    \end{equation*}
    while the blockwise bound gives
    $\|X_0\| \leq \|G\| (1 + \sigma_+ + \sigma_+^2) + \tilde\rho\, \sigma_+
        \leq (1 + \tilde\rho)(1 + \sigma_+ + \sigma_+^2) + \tilde\rho\, \sigma_+ =: \Gamma_0$.

    Step 3 of the proof of Theorem~\ref{thm:fov} carries over verbatim with $(\mathcal{P}, \mathcal{P}_s)$ replaced by
    $(\mathcal{P}_0, \mathcal{P}_{0,s})$: the perturbation $$\mathcal{F}_0 := \mathcal{P}_0 - \mathcal{P}_{0,s} = \begin{bmatrix}
            0 & B_1^T - B^T \\ 0 & 0 \end{bmatrix}$$
    satisfies
    $\|D^{-1} \mathcal{F}_0 D^{-1}\| \leq c_1 \varepsilon_h$, and
    $$\Pi_0 := D^{-1} \mathcal{P}_{0,s} D^{-1} = \begin{bmatrix} G^{-1} & K^T \\ & -I
        \end{bmatrix}$$
    is invertible with
    $$\Pi_0^{-1} = \begin{bmatrix} G & G K^T \\ & -I \end{bmatrix},$$
    so that
    $\|\Pi_0^{-1}\| \leq (1+\tilde\rho)(1+\sigma_+) + 1$. The same Neumann-series and
    perturbation arguments then yield, after a further decrease of $h_0$ if necessary,
    the bounds \eqref{eq:fov} for $\mathcal{A}\mathcal{P}_0^{-1}$ with
    $\gamma := \gamma_0/2$ and $\Gamma := \Gamma_0 + \gamma_0/2$, depending only on
    $C_a, C_A, \beta, C_b$ and $\rho$.
\end{proof}

\begin{remark}[pre-asymptotic regime]
    \label{rem:coarse}
    Theorem~\ref{thm:fov} is conditional on $h \leq h_0$, i.e.\ on the mesh resolving the
    geometry in the sense of \eqref{eq:ass_d}. On meshes that are too coarse relative to
    the embedded geometry (in particular when the surrogate normal is nearly orthogonal
    to the true normal, so that $\inf (\mathbf{n} \cdot \tilde{\mathbf{n}})$ degrades and
    $\|\mathbf{d}\|/h = O(1)$), both the coercivity constants of
    \cite{atallah2020analysis} and the perturbation bound of Lemma~\ref{lem:perturbation}
    deteriorate, and the theory gives no mesh-independent bound. The theory thus only
    guarantees uniform convergence once the mesh resolves the embedded geometry; it makes
    no prediction about solver behaviour in the under-resolved regime.
\end{remark}

\section{Implementation details}
\label{sec:implementation}

The numerical implementation is based on the open-source finite element library \texttt{deal.II}~\cite{dealii2024}. It
provides a comprehensive framework for the implementation of finite element methods, including mesh handling, finite
element spaces, assembly of linear systems, and interfaces to various linear algebra solvers and preconditioners. The
treatment of the embedded geometry and the assembly of the SBM bilinear form follow the implementation developed for
the geometric multigrid solver of~\cite{wichrowski2025geometric, shypatch}; we summarise the components relevant to the
Stokes saddle-point system below.

While in this work the velocity block $A$ is solved using a direct solver, the geometric multigrid preconditioner
introduced in~\cite{wichrowski2025geometric} and extended in~\cite{shypatch} can be used to invert the velocity block
efficiently, and is expected to deliver $h$-independent convergence rates for higher-degree polynomials.

\subsection{Background mesh and geometry handling}
When implementing SBM on a non-body-fitted mesh, we need to handle cells intersected by the true boundary $\Gamma$. We
use a level set function $\phi(\mathbf{x})$ to implicitly define the domain $\Omega = \{\mathbf{x} \mid
    \phi(\mathbf{x}) < 0\}$, with $\Gamma = \{\mathbf{x} \mid \phi(\mathbf{x}) = 0\}$. Cells of the background mesh are
classified based on their intersection with the zero level set: cells entirely inside $\Omega$ (interior), cells
entirely outside $\Omega$ (exterior), and cells intersected by $\Gamma$. Of these, only the interior cells are
classified as active. In this specific implementation of the Stokes problem, all the geometric complexity lies in the
interior of the domain: the outer boundary is fitted to the mesh, while the embedded inner boundary is handled by SBM.

In our approach, degrees of freedom are formally assigned to all cells of the background mesh, including those
classified as non-active. This strategy simplifies the implementation by maintaining a consistent sparsity pattern and
avoiding the need to frequently re-compute the global matrix graph, which is particularly advantageous for problems
with moving boundaries. While it introduces zero rows and columns into the global system matrix for degrees of freedom
associated only with inactive cells, this does not adversely affect the convergence of the iterative GMRES solver, as
the corresponding entries in both the matrix and the right-hand side vanish. To ensure invertibility for the direct
solver applied to the velocity block, these inactive degrees of freedom are constrained, effectively placing $1$ on the
diagonal of each such row. In an optimised production implementation, the computational overhead of these inactive
regions could be minimised by restricting mesh refinement strictly to the vicinity of the active domain.

\subsection{Processing surrogate boundary and matrix assembly}
\label{sec:sbm_assembly}
The SBM requires computing the closest-point projection from points on the surrogate boundary $\Gamma\pr$ to the
true boundary $\Gamma$. In the geometries considered here the inner boundary is a circle or a composition of circles, so
the projection reduces to a radial projection onto the closest circle for a given point. For general geometries this
projection is found by solving a local nonlinear optimisation problem for each quadrature point on $\Gamma\pr$,
minimising the distance to the zero level set of $\phi(\mathbf{x})$ via a Lagrange-multiplier formulation solved with a
Newton--Raphson method, as described in~\cite{wichrowski2025geometric}; since this closest-point search on a level set
is natively supported in \texttt{deal.II}, generalising the present implementation to more complex geometries is
straightforward. The search for the closest point is performed only on the faces of active cells adjacent to
intersected cells. While this does not guarantee that the closest point is found inside the cell, the extrapolation is
expected to yield a good approximation of the boundary condition even when it lies slightly outside.

The extension of the function values from the surrogate boundary to the true boundary is implemented by evaluating the
shape functions of the active elements at the projected points on $\Gamma$. Since the finite element basis functions
are polynomials over each element, this evaluation is performed by mapping the physical coordinates of the points on
$\Gamma$ to the reference unit cell of the respective element. This effectively evaluates the shape functions at points
that may lie slightly outside the standard $[0,1]^d$ range of the unit cell, naturally providing the necessary
extrapolation $\mathbf{u}_p$ of the velocity values without requiring the explicit computation of high-order
derivatives. For the extrapolation $\mathbf{v}_p$ of the test functions we use a first-order Taylor expansion, as it
provides a better error estimate~\cite{atallah2022high}. The assembly of the cell contributions and interior faces to
the system matrix and right-hand side vector is then performed using the standard finite element assembly process
provided by \texttt{deal.II}.

\subsection{Block preconditioner}

The application of the block upper-triangular preconditioner \eqref{eq:prec_used} amounts to computing
\begin{equation}
    \begin{bmatrix}
        \mathbf{u} \\
        \mathbf{p}
    \end{bmatrix}
    =
    \begin{bmatrix}
        A & B^T \\
          & -M
    \end{bmatrix}^{-1}
    \begin{bmatrix}
        \mathbf{r}_u \\
        \mathbf{r}_p
    \end{bmatrix} ,
\end{equation}
where $A$ is the velocity block of the system matrix, $B^T$ is the transpose of the divergence block, $M$ is the
pressure mass matrix approximating the Schur complement (with the negative sign required by the field-of-values
analysis of Section~\ref{sec:theory}), and $\mathbf{r}_u$, $\mathbf{r}_p$ are the velocity and
pressure components of the residual. By block back-substitution, this is realised as
\begin{align}
    \mathbf{p} & = -M^{-1} \mathbf{r}_p ,                   \\
    \mathbf{u} & = A^{-1} (\mathbf{r}_u - B^T \mathbf{p}) .
\end{align}
The preconditioner is applied within each GMRES iteration; both $A^{-1}$ and $M^{-1}$ are realised by direct solvers
in the present implementation. The effectiveness of the preconditioner is assessed by the number of iterations
required for convergence, reported in Section~\ref{sec:results}.

\section{Numerical results}
\label{sec:results}

In this section, we examine the performance of the block upper-triangular preconditioner \eqref{eq:prec_used} applied
to the SBM discretisation of the Stokes problem. We investigate if the mesh-independent convergence established in
Section~\ref{sec:theory} is realised in practice. Theorem~\ref{thm:fov} predicts that, once the mesh resolves the
embedded geometry, preconditioned GMRES converges at a rate bounded independently of the mesh size. However, the theory
is silent about the under-resolved regime (Remark~\ref{rem:coarse}), and configurations with closely spaced embedded
boundaries are likely to enter this regime on coarse meshes. Our experiments are designed to probe both aspects: we
measure iteration counts under uniform mesh refinement for two embedded geometries of increasing difficulty, comparing
discontinuous- and continuous-pressure discretisations and the non-symmetric and symmetric formulations of the velocity
gradient introduced in Section~\ref{sec:sbm_stokes}.

The remainder of this section is organised as follows. Section~\ref{sec:exp_setup} describes the test geometries,
boundary conditions, and solver settings. Section~\ref{sec:iteration_counts} presents the iteration counts under mesh
refinement and discusses their dependence on the pressure space, the gradient formulation, and the geometry.
Section~\ref{sec:inexact_prec} examines the effect of replacing the exact velocity solve with an approximate multigrid
preconditioner.

\subsection{Experimental setup}
\label{sec:exp_setup}
We assess the solver on a family of test problems of increasing geometric complexity. The computational domain is the
unit square with body-fitted outer walls; the embedded inner boundaries are handled by the SBM. We consider two
geometric configurations, referred to as \emph{arrangements}:
\begin{itemize}
    \item \textbf{Arrangement 1}: a single embedded circular boundary;
    \item \textbf{Arrangement 2}: three closely spaced circular boundaries separated by narrow gaps.
\end{itemize}
The two arrangements are shown in Figure~\ref{fig:stream_tracer_comparison}.
In both cases the top and bottom walls carry Dirichlet conditions, the left and right walls carry homogeneous natural
(Neumann) conditions so that the pressure is not over-constrained, and the embedded SBM boundaries carry homogeneous
Dirichlet (no-slip) conditions, $\mathbf{u} = \mathbf{0}$. Each arrangement is solved on a sequence of uniformly
refined meshes with the block upper-triangular preconditioner \eqref{eq:prec_used}. For each velocity degree we compare
a discontinuous-pressure pair (denoted DG) with a continuous-pressure, Taylor--Hood--type pair (denoted $Q$), and for
each pair we report results for both the non-symmetric and the symmetric formulation of the velocity gradient. The
Nitsche penalty multiplier is fixed at $5$. GMRES is run without restarts to a fixed relative residual tolerance,
applying the velocity block exactly and the pressure Schur complement through the mass matrix $-M$ as in
\eqref{eq:prec_used}.

\begin{figure}[h]
    \centering
    \includegraphics[width=0.45\textwidth]{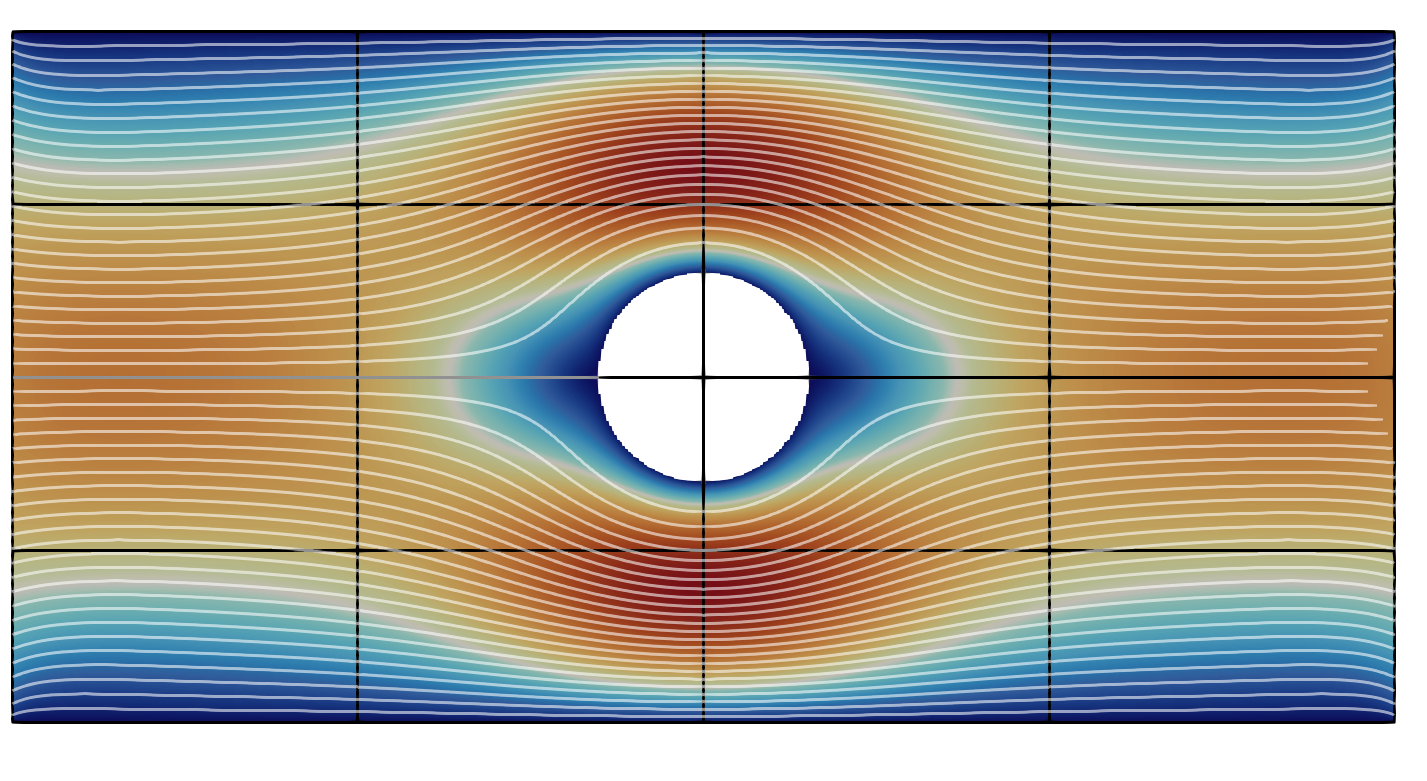}
    \hfill
    \includegraphics[width=0.45\textwidth]{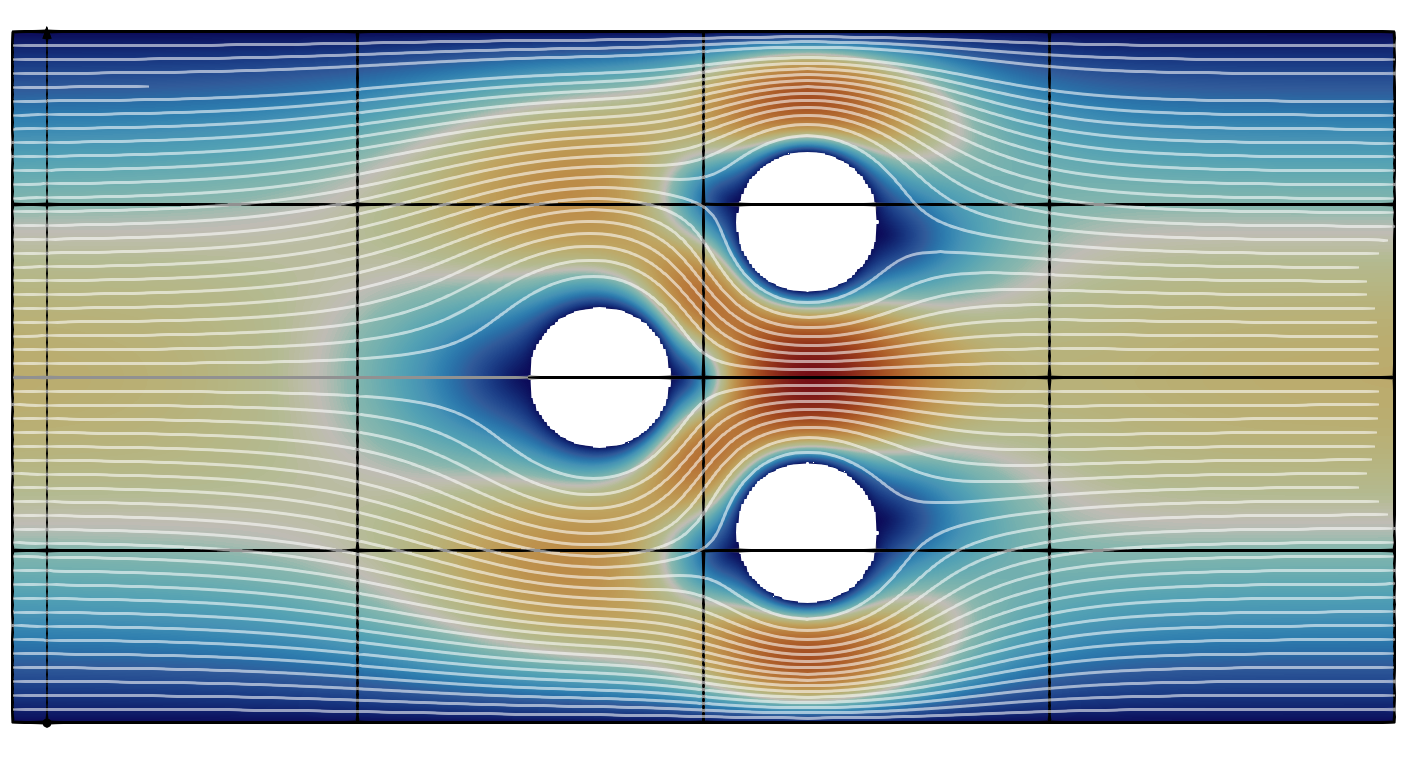}
    \caption{The two arrangements and the solution obtained with $\pairDGtwo$. Colour indicates the velocity
        magnitude and white lines are stream tracers, both taken from the finest refinement level; the mesh drawn on top is
        the coarsest grid used ($2$ refinements), shown for clarity, as overlaying the finest grid would obscure the flow field.
        Arrangement 2 exhibits the most intricate flow pattern owing to the
        narrow channels between the embedded boundaries.}
    \label{fig:stream_tracer_comparison}
\end{figure}

The arrangements are ordered by the geometric difficulty they pose: Arrangement 1 is a single obstacle, whereas
Arrangement 2 is more demanding, as the narrow gaps between its boundaries stress the geometric resolution assumption
\eqref{eq:ass_d} on coarse meshes.

Table~\ref{tab:dofs_deg2} reports the number of Stokes degrees of freedom at each refinement level for the two
arrangements and both discretisation pairs ($k=2$ velocity).

\begin{table}[h]
    \centering
    \begin{tabular}{cc|cccccccc}
      \multicolumn{1}{c}{} & \multicolumn{1}{c}{pair} & \multicolumn{1}{c}{2} & \multicolumn{1}{c}{3} & \multicolumn{1}{c}{4} & \multicolumn{1}{c}{5} & \multicolumn{1}{c}{6} & \multicolumn{1}{c}{7} & \multicolumn{1}{c}{8} & \multicolumn{1}{c}{9} \\
      \hline
      \multirow{2}{*}{Arr.\ 1} & $\Qcont{2}/\Qdg{0}$ & 156 & 592 & 2,292 & 9,068 & 35,948 & 142,960 & 570,456 & 2,278,892 \\
      & $\Qcont{2}/\Qcont{1}$ & 168 & 614 & 2,334 & 9,148 & 36,106 & 143,276 & 571,086 & 2,280,148 \\
      \cline{2-10}
      \multirow{2}{*}{Arr.\ 2} & $\Qcont{2}/\Qdg{0}$ & 142 & 558 & 2,250 & 8,886 & 35,368 & 141,080 & 563,034 & 2,250,036 \\
      & $\Qcont{2}/\Qcont{1}$ & 156 & 585 & 2,303 & 8,987 & 35,560 & 141,456 & 563,783 & 2,251,529 \\
      \hline
    \end{tabular}

    \caption{Number of Stokes degrees of freedom for $k=2$ at each refinement level.}
    \label{tab:dofs_deg2}
\end{table}

\subsection{Iteration counts under mesh refinement}
\label{sec:iteration_counts}
We now turn to the main quantity of interest: the number of preconditioned GMRES iterations required to reach the
prescribed tolerance as the mesh is refined. Tables~\ref{tab:iters_deg2} and~\ref{tab:iters_deg3}
report the iteration counts at each refinement level for the $\pairDGtwo$ and $\pairDGthree$ element pairs, respectively.

\begin{table}[h]
    \centering
    \begin{minipage}[t]{0.48\textwidth}
        \centering
    \begin{tabular}{cc|ccccccc}
      \multicolumn{9}{c}{\textbf{regular gradient}} \\
      \multicolumn{1}{c}{} & \multicolumn{1}{c}{$Q_h$} & \multicolumn{1}{c}{3} & \multicolumn{1}{c}{4} & \multicolumn{1}{c}{5} & \multicolumn{1}{c}{6} & \multicolumn{1}{c}{7} & \multicolumn{1}{c}{8} & \multicolumn{1}{c}{9} \\
      \hline
      \multirow{2}{*}{Arr.\ 1} & $\Qdg{0}$ & 13 & 14 & 15 & 16 & 16 & 17 & 16 \\
      & $\Qcont{1}$ & 14 & 17 & 17 & 20 & 23 & 23 & 24 \\
      \cline{2-9}
      \multirow{2}{*}{Arr.\ 2} & $\Qdg{0}$ & 23 & 47 & 26 & 26 & 28 & 28 & 31 \\
      & $\Qcont{1}$ & 30 & 113 & 49 & 74 & 57 & 70 & 49 \\
      \hline
    \end{tabular}

    \end{minipage}
    \hfill
    \begin{minipage}[t]{0.48\textwidth}
        \centering
    \begin{tabular}{cc|ccccccc}
      \multicolumn{9}{c}{\textbf{symmetric gradient}} \\
      \multicolumn{1}{c}{} & \multicolumn{1}{c}{$Q_h$} & \multicolumn{1}{c}{3} & \multicolumn{1}{c}{4} & \multicolumn{1}{c}{5} & \multicolumn{1}{c}{6} & \multicolumn{1}{c}{7} & \multicolumn{1}{c}{8} & \multicolumn{1}{c}{9} \\
      \hline
      \multirow{2}{*}{Arr.\ 1} & $\Qdg{0}$ & 14 & 15 & 16 & 16 & 16 & 16 & 16 \\
      & $\Qcont{1}$ & 15 & 18 & 16 & 19 & 22 & 23 & 21 \\
      \cline{2-9}
      \multirow{2}{*}{Arr.\ 2} & $\Qdg{0}$ & 25 & 32 & 24 & 24 & 24 & 23 & 23 \\
      & $\Qcont{1}$ & 30 & 42 & 47 & 54 & 43 & 34 & 30 \\
      \hline
    \end{tabular}

    \end{minipage}
    \caption{GMRES iterations for $\pairDGtwo$ elements across arrangement configurations and discretisation choices. NC = failed to reach the tolerance within the iteration cap.}
    \label{tab:iters_deg2}
\end{table}

\begin{table}[h]
    \centering
    \begin{minipage}[t]{0.48\textwidth}
        \centering
    \begin{tabular}{cc|cccccc}
      \multicolumn{8}{c}{\textbf{regular gradient}} \\
      \multicolumn{1}{c}{} & \multicolumn{1}{c}{$Q_h$} & \multicolumn{1}{c}{2} & \multicolumn{1}{c}{3} & \multicolumn{1}{c}{4} & \multicolumn{1}{c}{5} & \multicolumn{1}{c}{6} & \multicolumn{1}{c}{7} \\
      \hline
      \multirow{2}{*}{Arr.\ 1} & $\Qdg{1}$ & 15 & 19 & 19 & 30 & 18 & 23 \\
      & $\Qcont{2}$ & 21 & 21 & 19 & 53 & 30 & 53 \\
      \cline{2-8}
      \multirow{2}{*}{Arr.\ 2} & $\Qdg{1}$ & 20 & 296 & 116 & 204 & 164 & 72 \\
      & $\Qcont{2}$ & 26 & NC & NC & NC & NC & NC \\
      \hline
    \end{tabular}

    \end{minipage}
    \hfill
    \begin{minipage}[t]{0.48\textwidth}
        \centering
    \begin{tabular}{cc|cccccc}
      \multicolumn{8}{c}{\textbf{symmetric gradient}} \\
      \multicolumn{1}{c}{} & \multicolumn{1}{c}{$Q_h$} & \multicolumn{1}{c}{2} & \multicolumn{1}{c}{3} & \multicolumn{1}{c}{4} & \multicolumn{1}{c}{5} & \multicolumn{1}{c}{6} & \multicolumn{1}{c}{7} \\
      \hline
      \multirow{2}{*}{Arr.\ 1} & $\Qdg{1}$ & 20 & 16 & 18 & 23 & 18 & 24 \\
      & $\Qcont{2}$ & 20 & 17 & 21 & 27 & 22 & 56 \\
      \cline{2-8}
      \multirow{2}{*}{Arr.\ 2} & $\Qdg{1}$ & 21 & NC & 89 & 208 & 279 & 111 \\
      & $\Qcont{2}$ & 30 & NC & 207 & 323 & NC & NC \\
      \hline
    \end{tabular}

    \end{minipage}
    \caption{GMRES iterations for $\pairDGthree$ elements across arrangement configurations and discretisation choices. NC = failed to reach the tolerance within the iteration cap.}
    \label{tab:iters_deg3}
\end{table}

\paragraph{$\pairDGtwo$ results.}
For the lower-order pair the clearest trend is the contrast between the two pressure spaces. For the
discontinuous-pressure ($\Qdg{0}$) pairs the iteration counts are low and flat across all refinement levels. On
Arrangement~1 they remain in the range $13$--$17$ (non-symmetric) and $14$--$16$ (symmetric), confirming the
mesh-independent behaviour predicted by Theorem~\ref{thm:fov} and the effectiveness of the pressure mass matrix as a
Schur complement approximation. On the more challenging Arrangement~2 there is an iteration spike on the coarsest mesh
(up to $47$ non-symmetric, $32$ symmetric) caused by the unresolved narrow gaps, after which the counts return to the
range $26$--$31$ (non-symmetric) and $23$--$25$ (symmetric) as the geometry is resolved.

The continuous-pressure ($\Qcont{1}$) pairs are more sensitive to the geometry. On Arrangement~1 the counts grow only
mildly with refinement, from $14$ to $24$ (non-symmetric) and from $15$ to a peak of $23$ (symmetric). On Arrangement~2
they are substantially higher and depend non-monotonically on the mesh: the non-symmetric formulation reaches $113$ at
refinement level~4 and remains variable ($49$--$74$) at finer levels, whereas the symmetric formulation peaks at $54$
and decreases steadily, returning to $30$ on the finest mesh. The symmetric-gradient formulation thus provides a
meaningful practical benefit for the continuous-pressure pair in the under-resolved regime, which we attribute to the
better-conditioned velocity block it produces. A tangential-derivative penalisation has been proposed in the literature
to further stabilise the SBM in under-resolved regimes~\cite{SBMP1}; it is not included in our implementation.

\paragraph{$\pairDGthree$ results.}
The higher-order pair ($\pairDGthree$) reveals a different behaviour. On Arrangement~1 both pressure spaces still
converge, but the iteration counts are less uniform across refinement levels: the DG pair oscillates between $15$ and
$30$ (non-symmetric) and $16$ and $24$ (symmetric), while the continuous-pressure pair reaches up to $53$
(non-symmetric) and $56$ (symmetric). On Arrangement~2 the situation deteriorates. The DG pair reaches counts as high
as $296$ (non-symmetric) and $279$ (symmetric) before recovering to $72$ and $111$ respectively at the finest level,
indicating that the geometric resolution assumption is stressed over a much wider range of mesh sizes at higher
polynomial degree. The continuous-pressure pair fails to converge (NC) at almost all refinement levels for
Arrangement~2; the symmetric formulation recovers only at intermediate levels (with counts as high as $323$) before
failing again on the two finest meshes. These results suggest that the block preconditioner, while effective for $k=2$,
requires finer meshes to reach the resolved regime for $k=3$, and that additional stabilisation may be necessary for
high-order discretisations in geometrically demanding configurations.

\subsection{Inexact preconditioners}
\label{sec:inexact_prec}

All results in Section~\ref{sec:iteration_counts} rely on exact solves with the velocity block $A$. In practice, large
three-dimensional problems demand an approximate inverse. A natural candidate is the geometric multigrid preconditioner
of~\cite{wichrowski2025geometric,shypatch}: at its core lies the \emph{Full-Residual Shy Patch} smoother, a subspace
correction strategy that forms overlapping vertex patches. The shy patch smoother was shown in~\cite{shypatch} to
deliver low and mesh-stable iteration counts for the SBM--Poisson problem up to polynomial degree $p=3$ in three
dimensions.

Table~\ref{tab:iters_mg_deg2} reports outer iteration counts for $\pairDGtwo$ elements on Arrangement~1 when $A^{-1}$
is replaced by an inexact solve: $N_{\mathrm{inner}}$ steps of GMRES preconditioned by one V-cycle of the shy-patch
multigrid preconditioner are applied to the velocity block, with $N_{\mathrm{inner}} \in \{2, 5\}$. Because this inner
solver is itself iterative, the block preconditioner becomes \emph{non-linear}, and the outer Krylov method must be
switched from GMRES to FGMRES~\cite{Saad1993} to accommodate the variable preconditioner. We use restarted FGMRES with
a restart length of $30$.

\begin{table}[h]
    \centering
    \begin{minipage}[t]{0.48\textwidth}
        \centering
    \begin{tabular}{cc|ccccc}
      \multicolumn{7}{c}{\textbf{regular gradient}} \\
      \multicolumn{1}{c}{} & \multicolumn{1}{c}{$Q_h$} & \multicolumn{1}{c}{5} & \multicolumn{1}{c}{6} & \multicolumn{1}{c}{7} & \multicolumn{1}{c}{8} & \multicolumn{1}{c}{9} \\
      \hline
      \multirow{2}{*}{$N_{\mathrm{inner}} = 2$} & $\Qdg{0}$ & 16 & NC & NC & - & - \\
      & $\Qcont{1}$ & 19 & NC & NC & - & - \\
      \cline{2-7}
      \multirow{2}{*}{$N_{\mathrm{inner}} = 5$} & $\Qdg{0}$ & 16 & 17 & 18 & 19 & NC \\
      & $\Qcont{1}$ & 18 & 21 & 26 & 26 & NC \\
      \hline
    \end{tabular}

    \end{minipage}
    \hfill
    \begin{minipage}[t]{0.48\textwidth}
        \centering
    \begin{tabular}{cc|ccccc}
      \multicolumn{7}{c}{\textbf{symmetric gradient}} \\
      \multicolumn{1}{c}{} & \multicolumn{1}{c}{$Q_h$} & \multicolumn{1}{c}{5} & \multicolumn{1}{c}{6} & \multicolumn{1}{c}{7} & \multicolumn{1}{c}{8} & \multicolumn{1}{c}{9} \\
      \hline
      \multirow{2}{*}{$N_{\mathrm{inner}} = 2$} & $\Qdg{0}$ & NC & 69 & NC & NC & - \\
      & $\Qcont{1}$ & NC & NC & NC & - & - \\
      \cline{2-7}
      \multirow{2}{*}{$N_{\mathrm{inner}} = 5$} & $\Qdg{0}$ & 17 & 17 & 22 & 18 & NC \\
      & $\Qcont{1}$ & 18 & 21 & 30 & 25 & NC \\
      \hline
    \end{tabular}

    \end{minipage}
    \caption{FGMRES iterations for $\pairDGtwo$ elements with shy-patch multigrid preconditioning of the velocity block
        (Arrangement~1 only).  $N_{\mathrm{inner}}$ denotes the number of inner GMRES steps applied to the velocity block.
        NC = failed, $-$ = not run.}
    \label{tab:iters_mg_deg2}
\end{table}

The results reveal a strong sensitivity to the number of inner iterations. With only $N_{\mathrm{inner}}=2$ inner steps
the method frequently fails to converge (NC), indicating that the approximate velocity solve is too inaccurate to keep
the outer iteration bounded. Increasing to $N_{\mathrm{inner}}=5$ restores convergence at most refinement levels; the
outer iteration counts are then close to those of the exact-solve experiments. The counts grow by a small but
approximately fixed increment per refinement level (e.g.\ $16\to17\to18\to19$ for the $\Qdg{0}$ pair with non-symmetric
gradient), but this mild growth is not introduced by the inexact inner solve: the exact-solver counts show the same
pattern in the same refinement range (cf.\ Table~\ref{tab:iters_deg2}). It is a property of the problem itself (most
likely the gradual resolution of the embedded geometry) rather than a sign that the multigrid preconditioner degrades
with mesh refinement. The symmetric-gradient formulation with $N_{\mathrm{inner}}=5$ shows the same trend.

The contrast between $N_{\mathrm{inner}}=2$ and $N_{\mathrm{inner}}=5$ is consistent with the analysis, though it
should be read with care. Corollary~\ref{cor:inexact} provides a \emph{sufficient} condition: if the inner solve
contracts the velocity error in the energy norm with a factor $\rho < \tfrac12$, the field-of-values bounds hold and
the outer iteration is mesh-independent. The bound itself degrades \emph{gradually} as $\rho$ increases towards
$\tfrac12$ (the coercivity constant $\gamma_0 = (\tfrac12 - \tilde\rho)/(2 + \sigma_-^{-2})$ vanishes continuously at
the threshold), and the value $\tfrac12$ is an artefact of the proof technique rather than a physical limit. Once this
sufficient condition is violated the corollary offers no guarantee; it does not predict failure. Two further caveats
apply. First, \eqref{eq:inexact_A} assumes a \emph{fixed, linear} approximate inverse $A_0$, whereas
$N_{\mathrm{inner}}$ steps of GMRES form a variable, non-linear preconditioner, so the corollary is suggestive here
rather than directly applicable. Second, the non-convergence (NC) entries denote failure to reach the tolerance within
the iteration cap, not divergence.

We emphasise that this sensitivity has no counterpart in body-fitted Stokes solvers, where block-triangular
preconditioning with a few inner multigrid sweeps on the velocity block is routinely robust. The distinguishing feature
of the SBM system is the non-normality of the velocity block: as observed in Section~\ref{sec:theory}, every
non-symmetric term carries a factor of the distance vector $\mathbf{d}$ and is concentrated on the surrogate boundary.
This matters for the inexact solve in a specific way. The inner GMRES iteration monitors and reduces the
\emph{residual} of the velocity system, whereas the hypothesis of Corollary~\ref{cor:inexact} concerns the contraction
of the \emph{error in the energy norm}. For a nearly symmetric block these two quantities are equivalent with moderate
constants, but for a non-normal block the equivalence degrades, and a few inner steps may achieve a respectable
residual reduction while contracting the energy error only weakly. We therefore read the contrast between the two row
groups of Table~\ref{tab:iters_mg_deg2} as follows: with $N_{\mathrm{inner}}=2$ the effective contraction $\rho$
remains too large for the field-of-values argument to apply, whereas with $N_{\mathrm{inner}}=5$ it falls into the
regime covered by Corollary~\ref{cor:inexact} and near-mesh-independent convergence is restored. A quantitative study
of the inner contraction in the energy norm is left for future work.

The behaviour under restarting supports this reading. Restarting discards the accumulated Krylov subspace, and for a
slowly converging problem this loss is severe. In additional experiments with a restart length of $100$, several of the
failing $N_{\mathrm{inner}}=2$ configurations made steady progress up to the first restart, after which convergence
visibly deteriorated. The NC entries should therefore be understood as slow, restart-limited convergence rather than
divergence: a sufficiently accurate inner solve lets the outer iteration converge well within a single restart cycle,
whereas a marginal one pushes it into a regime where restarted FGMRES is no longer effective.

In summary, the block upper-triangular preconditioner~\eqref{eq:prec_used} with an inexact velocity solve is
\emph{highly sensitive to the quality of that solve}, in a way not seen for body-fitted discretisations. A sufficient
number of inner iterations, combined with a restart length long enough for the outer iteration to converge within one
cycle, is essential to preserve the mesh-independent convergence established by the exact-solve theory.

\FloatBarrier
\section{Conclusion}
\label{sec:conclusion}
We investigated the performance of a block upper-triangular preconditioner for the Stokes problem discretised with the Shifted Boundary Method (SBM). Our experiments across two geometric arrangements demonstrated that the preconditioner provides stable iteration counts for discontinuous-pressure discretisations, in line with the field-of-values analysis of Section~\ref{sec:theory}. Continuous-pressure pairs are more sensitive in the presence of closely spaced embedded boundaries (Arrangement~2): for the lower-order pair the symmetric-gradient formulation markedly improves convergence on resolved meshes, whereas at higher polynomial degree the continuous-pressure pair remains unreliable on the demanding geometry, indicating that additional stabilisation is needed in the under-resolved regime. To the best of our knowledge, this is the first demonstration of scalable block preconditioning for saddle-point systems arising from SBM discretisations of the Stokes equations.

An important extension of the method presented here is to replace the exact velocity block solve with a high-quality
approximate inverse. Our experiments with the shy-patch geometric multigrid
preconditioner~\cite{wichrowski2025geometric,shypatch} (Section~\ref{sec:inexact_prec}) confirm that the block
preconditioner is highly sensitive to the accuracy of this inner solve: iteration counts increase substantially when
only a small number of V-cycles is used. Achieving mesh-independent outer iteration counts requires either more
multigrid smoothing sweeps or a more powerful smoother, and is an important target for future work. A fully scalable
solver of this form would make the method practical for large-scale 3D simulations.

\section*{Declarations}
Language models (Claude, Gemini) were used in the drafting process to accelerate convergence toward a readable manuscript. The authors retain full accountability for all scientific content.

\FloatBarrier
\bibliographystyle{siam}
\bibliography{stokes_literature}

@article{SBMP1,
  title = {The shifted boundary method for embedded domain computations. Part I: Poisson and Stokes problems},
  journal = {Journal of Computational Physics},
  volume = {372},
  pages = {972-995},
  year = {2018},
  issn = {0021-9991},
  doi = {https://doi.org/10.1016/j.jcp.2017.10.026},
  url = {https://www.sciencedirect.com/science/article/pii/S0021999117307799},
  author = {A. Main and G. Scovazzi}
}

@article{main2018shiftedVol2,
  title = {{The shifted boundary method for embedded domain computations. Part II: Linear advection--diffusion and incompressible Navier--Stokes equations}},
  author = {Main, A and Scovazzi, G},
  journal = {Journal of Computational Physics},
  volume = {372},
  pages = {996--1026},
  year = {2018},
  publisher = {Elsevier}
}

@article{BlockPreconditioners,
  author = {Krzyzanowski, Piotr},
  title = {On Block Preconditioners for Nonsymmetric Saddle Point Problems},
  journal = {SIAM Journal on Scientific Computing},
  volume = {23},
  number = {1},
  pages = {157-169},
  year = {2001},
  doi = {10.1137/S1064827599360406},
  url = {https://doi.org/10.1137/S1064827599360406},
  eprint = {https://doi.org/10.1137/S1064827599360406}
}

@article{dealii2024,
  title = {Thedeal. II library, version 9.7},
  author = {Arndt, Daniel and Bangerth, Wolfgang and Bergbauer, Maximilian and Blais, Bruno and Fehling, Marc and Gassm{\"o}ller, Rene and Heister, Timo and Heltai, Luca and Kronbichler, Martin and Maier, Matthias and others},
  journal = {Journal of Numerical Mathematics},
  year = {2025},
  publisher = {De Gruyter}
}

@article{atallah2020analysis,
  title = {Analysis of the shifted boundary method for the Stokes problem},
  journal = {Computer Methods in Applied Mechanics and Engineering},
  volume = {358},
  pages = {112609},
  year = {2020},
  issn = {0045-7825},
  doi = {https://doi.org/10.1016/j.cma.2019.112609},
  url = {https://www.sciencedirect.com/science/article/pii/S0045782519304852},
  author = {Nabil M. Atallah and Claudio Canuto and Guglielmo Scovazzi}
}

@article{wichrowski2025geometric,
  title = {A Geometric Multigrid Preconditioner for Discontinuous Galerkin Shifted Boundary Method},
  author = {Wichrowski, Michal},
  journal = {arXiv preprint arXiv:2506.12899},
  year = {2025}
}

@article{shypatch,
  title = {A Geometric Multigrid Preconditioner for Shifted Boundary Method},
  author = {Michał Wichrowski and Ajay Ajith},
  year = {2026},
  eprint = {2601.10399},
  archiveprefix = {arXiv},
  primaryclass = {math.NA},
  url = {https://arxiv.org/abs/2601.10399},
  note = {Accepted into CMAME}
}

@article{BlockPrec2,
  author = {Krzyżanowski, Piotr},
  title = {On block preconditioners for saddle point problems with singular or indefinite (1, 1) block},
  journal = {Numerical Linear Algebra with Applications},
  volume = {18},
  number = {1},
  pages = {123-140},
  doi = {https://doi.org/10.1002/nla.717},
  url = {https://onlinelibrary.wiley.com/doi/abs/10.1002/nla.717},
  eprint = {https://onlinelibrary.wiley.com/doi/pdf/10.1002/nla.717},
  year = {2011}
}

@article{baumann1999discontinuous,
  title = {{A discontinuous hp finite element method for convection—diffusion problems}},
  author = {Baumann, Carlos Erik and Oden, J Tinsley},
  journal = {Computer Methods in Applied Mechanics and Engineering},
  volume = {175},
  number = {3-4},
  pages = {311--341},
  year = {1999},
  publisher = {Elsevier}
}

@article{PreconditioningPDEs,
  author = {Mardal, Kent-Andre and Winther, Ragnar},
  title = {Preconditioning discretizations of systems of partial differential equations},
  journal = {Numerical Linear Algebra with Applications},
  volume = {18},
  number = {1},
  pages = {1-40},
  doi = {https://doi.org/10.1002/nla.716},
  url = {https://onlinelibrary.wiley.com/doi/abs/10.1002/nla.716},
  eprint = {https://onlinelibrary.wiley.com/doi/pdf/10.1002/nla.716},
  year = {2011}
}

@book{Elman2014-gj,
  title = {Finite elements and fast iterative solvers: With applications in
           incompressible fluid dynamics},
  author = {Elman, Howard C and Silvester, David J and Wathen, Andy},
  publisher = {Oxford University Press},
  edition = 2,
  year = 2014,
  address = {London, England}
}

@article{zorrilla2024shifted,
  title = {{A shifted boundary method based on extension operators}},
  author = {Zorrilla, Rub{\'e}n and Rossi, Riccardo and Scovazzi, Guglielmo and Canuto, Claudio and Rodr{\'\i}guez-Ferran, Antonio},
  journal = {Computer Methods in Applied Mechanics and Engineering},
  volume = {421},
  pages = {116782},
  year = {2024},
  publisher = {Elsevier}
}

@article{nitsche1971variationsprinzip,
  title = {{Über ein Variationsprinzip zur Lösung von Dirichlet-Problemen bei Verwendung von Teilräumen, die keinen Randbedingungen unterworfen sind}},
  author = {Nitsche, Joachim A},
  journal = {Abhandlungen aus dem Mathematischen Seminar der Universität Hamburg},
  volume = {36},
  pages = {9--15},
  year = {1971},
  publisher = {Springer}
}

@article{yang2024optimal,
  title = {{Optimal surrogate boundary selection and scalability studies for the shifted boundary method on octree meshes}},
  author = {Yang, Cheng-Hau and Saurabh, Kumar and Scovazzi, Guglielmo and Canuto, Claudio and Krishnamurthy, Adarsh and Ganapathysubramanian, Baskar},
  journal = {Computer Methods in Applied Mechanics and Engineering},
  volume = {419},
  pages = {116686},
  year = {2024},
  publisher = {Elsevier}
}

@article{atallah2022high,
  title = {The high-order Shifted Boundary Method and its analysis},
  journal = {Computer Methods in Applied Mechanics and Engineering},
  volume = {394},
  pages = {114885},
  year = {2022},
  issn = {0045-7825},
  doi = {https://doi.org/10.1016/j.cma.2022.114885},
  url = {https://www.sciencedirect.com/science/article/pii/S0045782522001797},
  author = {Nabil M. Atallah and Claudio Canuto and Guglielmo Scovazzi}
}

@article{atallah2021analysis,
  title = {{Analysis of the shifted boundary method for the Poisson problem in domains with corners}},
  author = {Atallah, N. and Canuto, C. and Scovazzi, G.},
  journal = {Mathematics of Computation},
  volume = {90},
  number = {331},
  pages = {2041--2069},
  year = {2021}
}

@article{atallah2021shifted,
  title = {{The shifted boundary method for solid mechanics}},
  author = {Atallah, Nabil and Canuto, Claudio and Scovazzi, Guglielmo},
  journal = {International Journal for Numerical Methods in Engineering},
  volume = {122},
  number = {20},
  pages = {5935--5970},
  year = {2021},
  publisher = {Wiley Online Library}
}

@article{atallah2024nonlinear,
  title = {{Nonlinear elasticity with the shifted boundary method}},
  author = {Atallah, Nabil and Scovazzi, Guglielmo},
  journal = {Computer Methods in Applied Mechanics and Engineering},
  volume = {426},
  pages = {116988},
  year = {2024},
  publisher = {Elsevier}
}

@article{li2020shifted,
  title = {{The shifted interface method: a flexible approach to embedded interface computations}},
  author = {Li, K. and  Atallah, N. and Main, A. and Scovazzi, G.},
  journal = {International Journal for Numerical Methods in Engineering},
  volume = {121},
  number = {3},
  pages = {492--518},
  year = {2020},
  publisher = {Wiley Online Library}
}

@article{collins2023penalty,
  title = {{A penalty-free shifted boundary method of arbitrary order}},
  author = {Collins, J Haydel and Lozinski, Alexei and Scovazzi, Guglielmo},
  journal = {Computer Methods in Applied Mechanics and Engineering},
  volume = {417},
  pages = {116301},
  year = {2023},
  publisher = {Elsevier}
}

@article{kuzmin2022unfitted,
  title = {{An unfitted finite element method using level set functions for extrapolation into deformable diffuse interfaces}},
  author = {Kuzmin, Dmitri and B{\"a}cker, Jan-Phillip},
  journal = {Journal of Computational Physics},
  volume = {461},
  pages = {111218},
  year = {2022},
  publisher = {Elsevier}
}

@article{xue2021new,
  title = {{A new finite element level set reinitialization method based on the shifted boundary method}},
  author = {Xue, Tianju and Sun, WaiChing and Adriaenssens, Sigrid and Wei, Yujie and Liu, Chuanqi},
  journal = {Journal of Computational Physics},
  volume = {438},
  pages = {110360},
  year = {2021},
  publisher = {Elsevier}
}

@article{xu2024weighted,
  title = {{A weighted shifted boundary method for immersed moving boundary simulations of Stokes' flow}},
  author = {Xu, D. and Colom{\'e}s, O. and Main, A. and Li, K. and Atallah, N. and Abboud, N. and Scovazzi, G.},
  journal = {Journal of Computational Physics},
  volume = {510},
  pages = {113095},
  year = {2024},
  publisher = {Elsevier}
}

@article{deprenter2019preconditioning,
  author = {de Prenter, F. and Verhoosel, C. V. and van Brummelen, E. H.},
  title = {Preconditioning immersed isogeometric finite element methods with
           application to flow problems},
  journal = {Computer Methods in Applied Mechanics and Engineering},
  year = {2019},
  volume = {348},
  pages = {604--631}
}

@article{gross2023cutfem,
  author = {Gross, Sven and Reusken, Arnold},
  title = {Analysis of optimal preconditioners for {CutFEM}},
  journal = {Numerical Linear Algebra with Applications},
  year = {2023},
  volume = {30},
  pages = {e2486}
}

@article{zhang2014ibm,
  author = {Zhang, Qinghai and Guy, Robert D. and Philip, Bobby},
  title = {A projection preconditioner for solving the implicit immersed
           boundary equations},
  journal = {Numerical Mathematics: Theory, Methods and Applications},
  year = {2014},
  volume = {7},
  pages = {473--498}
}

@misc{bano2025ibm,
  author = {Ba{\~n}o, I. and others},
  title = {A {SIMPLE}-Based Preconditioned Solver for the Direct-Forcing
           Immersed Boundary Method},
  year = {2025},
  note = {arXiv:2501.15314}
}

@article{wichrowski2025matrix,
  author = {Wichrowski, Micha{\l}},
  title = {Multigrid p-Robustness at {Jacobi} Speeds: Efficient Matrix-Free
           Implementation of Local p-Multigrid Solvers},
  year = {2025},
  journal = {arXiv preprint arXiv:2512.02577}
}

@article{wichrowski2025local,
  title = {Local solvers for high-order patch smoothers via p-multigrid},
  author = {Wichrowski, Micha{\l}},
  journal = {arXiv preprint arXiv:2510.17785},
  year = {2025}
}

@article{starke1997fov,
  title={Field-of-values analysis of preconditioned iterative methods for nonsymmetric elliptic problems},
  author={Starke, Gerhard},
  journal={Numerische Mathematik},
  volume={78},
  number={1},
  pages={103--117},
  year={1997},
  publisher={Springer}
}

@article{klawonn1999fov,
  title={Block triangular preconditioners for nonsymmetric saddle point problems: field-of-values analysis},
  author={Klawonn, Axel and Starke, Gerhard},
  journal={Numerische Mathematik},
  volume={81},
  number={4},
  pages={577--594},
  year={1999},
  publisher={Springer}
}

@article{loghin2004analysis,
  title={Analysis of preconditioners for saddle-point problems},
  author={Loghin, Daniel and Wathen, Andrew J},
  journal={SIAM Journal on Scientific Computing},
  volume={25},
  number={6},
  pages={2029--2049},
  year={2004},
  publisher={SIAM}
}

@article{murphy2000note,
  title={A note on preconditioning for indefinite linear systems},
  author={Murphy, Malcolm F and Golub, Gene H and Wathen, Andrew J},
  journal={SIAM Journal on Scientific Computing},
  volume={21},
  number={6},
  pages={1969--1972},
  year={2000},
  publisher={SIAM}
}

@article{eisenstat1983variational,
  title={Variational iterative methods for nonsymmetric systems of linear equations},
  author={Eisenstat, Stanley C and Elman, Howard C and Schultz, Martin H},
  journal={SIAM Journal on Numerical Analysis},
  volume={20},
  number={2},
  pages={345--357},
  year={1983},
  publisher={SIAM}
}

@article{burman2010ghost,
  title={{Ghost penalty}},
  author={Burman, Erik},
  journal={Comptes Rendus. Math{\'e}matique},
  volume={348},
  number={21-22},
  pages={1217--1220},
  year={2010}
}

@article{burman2015cutfem,
  title={{CutFEM: discretizing geometry and partial differential equations}},
  author={Burman, Erik and Claus, Susanne and Hansbo, Peter and Larson, Mats G and Massing, Andr{\'e}},
  journal={International Journal for Numerical Methods in Engineering},
  volume={104},
  number={7},
  pages={472--501},
  year={2015},
  publisher={Wiley Online Library}
}

@article{main2018shifted,
  title={{The shifted boundary method for embedded domain computations. Part I: Poisson and Stokes problems}},
  author={Main, A. and Scovazzi, G.},
  journal={Journal of Computational Physics},
  volume={372},
  pages={972--995},
  year={2018},
  publisher={Elsevier}
}

@article{atallah2020second,
  title={{The second-generation shifted boundary method and its numerical analysis}},
  author={Atallah, Nabil and Canuto, C. and Scovazzi, G.},
  journal={Computer Methods in Applied Mechanics and Engineering},
  volume={372},
  pages={113341},
  year={2020},
  publisher={Elsevier}
}

@Article{dealii2019design,
  title   = {{The {deal.II} finite element library: Design, features, and insights}},
  author  = {Daniel Arndt and Wolfgang Bangerth and Denis Davydov and
             Timo Heister and Luca Heltai and Martin Kronbichler and
             Matthias Maier and Jean-Paul Pelteret and Bruno Turcksin and
             David Wells},
  journal = {Computers \& Mathematics with Applications},
  year    = {2021},
  DOI     = {10.1016/j.camwa.2020.02.022},
  pages   = {407-422},
  volume  = {81},
  issn    = {0898-1221},
  url     = {https://arxiv.org/abs/1910.13247}
}

@article{hackbusch1985multi,
  title={{The Multi-Grid Method of the Second Kind}},
  author={Hackbusch, Wolfgang},
  journal={Multi-Grid Methods and Applications},
  pages={305--353},
  year={1985},
  publisher={Springer}
}

@article{ANTONELLI2024shiftedIGA,
  title={{The Shifted Boundary Method in Isogeometric Analysis}},
  author={Antonelli, Nicol{\`o} and Aristio, Ricky and Gorgi, Andrea and Zorrilla, Rub{\'e}n and Rossi, Riccardo and Scovazzi, Guglielmo and W{\"u}chner, Roland},
  journal={Computer Methods in Applied Mechanics and Engineering},
  volume={430},
  pages={117228},
  year={2024},
  publisher={Elsevier},
  doi={10.1016/j.cma.2024.117228}
}

@article{wichrowski2022stokes,
  title={A matrix-free multilevel preconditioner for the generalized stokes problem with discontinuous viscosity},
  author={Wichrowski, Micha{\l} and Krzy{\.z}anowski, Piotr},
  journal={Journal of Computational Science},
  volume={63},
  pages={101804},
  year={2022},
  publisher={Elsevier}
}

@article{braess1997efficient,
  title={An efficient smoother for the Stokes problem},
  author={Braess, Dietrich and Sarazin, Regina},
  journal={Applied Numerical Mathematics},
  volume={23},
  number={1},
  pages={3--19},
  year={1997},
  publisher={Elsevier}
}

@article{zulehner2000class,
  title={A class of smoothers for saddle point problems},
  author={Zulehner, Walter},
  journal={Computing},
  volume={65},
  number={3},
  pages={227--246},
  year={2000},
  publisher={Springer}
}

@article{schoberl2003schwarz,
  title={On Schwarz-type smoothers for saddle point problems},
  author={Sch{\"o}berl, Joachim and Zulehner, Walter},
  journal={Numerische Mathematik},
  volume={95},
  number={2},
  pages={377--399},
  year={2003},
  publisher={Springer}
}

@article{arnold2000multigrid,
  title={Multigrid in H (div) and H (curl)},
  author={Arnold, Douglas N and Falk, Richard S and Winther, Ragnar},
  journal={Numerische Mathematik},
  volume={85},
  number={2},
  pages={197--217},
  year={2000},
  publisher={Springer}
}

@article{kanschat2015multigrid,
  title={Multigrid methods for Hdiv-conforming discontinuous Galerkin methods for the Stokes equations},
  author={Kanschat, Guido and Mao, Youli},
  journal={arXiv preprint arXiv:1501.06021},
  year={2015}
}

@article{farrell2019augmented,
  title={An augmented Lagrangian preconditioner for the 3D stationary incompressible Navier--Stokes equations at high Reynolds number},
  author={Farrell, Patrick E and Mitchell, Lawrence and Wechsung, Florian},
  journal={SIAM Journal on Scientific Computing},
  volume={41},
  number={5},
  pages={A3073--A3096},
  year={2019},
  publisher={SIAM}
}

@article{jodlbauer2024matrix,
  title={Matrix-free monolithic multigrid methods for Stokes and generalized Stokes problems},
  author={Jodlbauer, Daniel and Langer, Ulrich and Wick, Thomas and Zulehner, Walter},
  journal={SIAM Journal on Scientific Computing},
  volume={46},
  number={3},
  pages={A1599--A1627},
  year={2024},
  publisher={SIAM}
}

@article{vanka1986block,
  title={Block-implicit multigrid solution of Navier-Stokes equations in primitive variables},
  author={Vanka, S Pratap},
  journal={Journal of Computational Physics},
  volume={65},
  number={1},
  pages={138--158},
  year={1986},
  publisher={Elsevier}
}

@article{Saad1993,
  title   = {A flexible inner-outer preconditioned {GMRES} algorithm},
  author  = {Saad, Yousef},
  journal = {SIAM Journal on Scientific Computing},
  volume  = {14},
  number  = {2},
  pages   = {461--469},
  year    = {1993},
  doi     = {10.1137/0914028}
}
\end{document}